\newcommand{\R}{\mathbb{R}}
\newcommand{\N}{\mathbb{N}}
\newcommand{\E}{\mathbb{E}}
\renewcommand{\S}{\mathbb{S}}
\newcommand{\cB}{\mathcal{B}}
\newcommand{\cH}{\mathcal{H}}
\newcommand{\cL}{\mathcal{L}}
\newcommand{\Le}{\mathcal{L}}
\newcommand{\e}{\epsilon}
\newcommand{\cC}{\mathcal{C}}
\newcommand{\cQ}{\mathcal{Q}}
\newcommand{\dist}{\operatorname{dist}}
\newcommand{\card}{\operatorname{card}}
\renewcommand{\hom}{\operatorname{hom}}
\newcommand{\ulocd}{\operatorname{\overline{\dim}_{loc}}}
\newcommand{\llocd}{\operatorname{\underline{\dim}_{loc}}}
\newcommand{\udimloc}{\operatorname{\overline{\dim}_{loc}}}
\newcommand{\ldimloc}{\operatorname{\underline{\dim}_{loc}}}
\newcommand{\trap}{\operatorname{trap}}
\newcommand{\por}{\operatorname{por}}
\newcommand{\black}{\operatorname{black}}
\renewcommand{\emptyset}{\varnothing}
\renewcommand{\epsilon}{\varepsilon}
\renewcommand{\rho}{\varrho}
\renewcommand{\phi}{\varphi}
\newcommand{\1}{\mathrm{\mathbf{1}}}
\DeclareMathOperator{\proj}{proj}
\theoremstyle{plain}
\newtheorem{thm}{Theorem}
\newtheorem{theorem}[thm]{Theorem}
\newtheorem{lemma}{Lemma}
\newtheorem{proposition}{Proposition}
\newtheorem*{claim*}{Claim}
\theoremstyle{definition}
\newtheorem{definition}{Definition}
\newtheorem*{examples*}{Examples}
\newtheorem*{example*}{Example}
\newtheorem{remark}{Remark}
\newtheorem{notation}{Notation}
\newtheorem*{notations*}{Notations}
\newtheorem*{notation*}{Notation}
\numberwithin{equation}{section}
\numberwithin{thm}{section}
\numberwithin{lemma}{section}
\numberwithin{proposition}{section}
\numberwithin{cor}{section}
\numberwithin{claim}{section}
\numberwithin{definition}{section}
\numberwithin{conjecture}{section}
\numberwithin{example}{section}
\numberwithin{remark}{section}
\numberwithin{notations}{section}
\numberwithin{notation}{section}
\author{Tuomas Sahlsten, Pablo Shmerkin, and Ville Suomala}
\title{Dimension, entropy, and the local distribution of measures}
\thanks{TS acknowledges the support from the CoE in Analysis and Dynamics Research and Emil Aaltonen Foundation, PS was partially supported by a Leverhulme Early Career Fellowship, and VS was in part supported by the Academy of Finland, project $\sharp$126976.}
\address{Department of Mathematics and Statistics, P.O Box 68, FI-00014 University of Helsinki, Finland}\email{tuomas.sahlsten@helsinki.fi}
\address{Department of Mathematics, Faculty of Engineering and Physical Sciences, University of Surrey, Guildford, GU2 7XH, United Kingdom} \email{p.shmerkin@surrey.ac.uk}
\address{Department of Mathematical sciences, P.O Box 3000, FI-90014 University of Oulu, Finland} \email{ville.suomala@oulu.fi}
\subjclass[2010]{28A80 (Primary); 28D20 (Secondary)} 
\begin{document}

\maketitle

\begin{abstract}
We present a general approach to the study of the local distribution of measures on Euclidean spaces, based on local entropy averages. As concrete applications, we unify, generalize, and simplify a number of recent results on local homogeneity, porosity and conical densities of measures.
\end{abstract}


\section{Introduction and statement of main results}

The \textit{upper-} and \textit{lower local dimensions} of a measure $\mu$ at a point $x\in\R^d$ are defined as
\begin{align*}
\ulocd(\mu,x) = \limsup_{r\searrow 0} \frac{\log\mu(B(x,r))}{\log r} \quad \text{and} \quad \llocd(\mu,x) = \liminf_{r\searrow 0} \frac{\log\mu(B(x,r))}{\log r},
\end{align*}
where $B(x,r)$ is the closed ball of center $x$ and radius $r$.

A central general problem in geometric measure theory consists in understanding the relation between local dimension and the distribution of measure inside small balls. Heuristically, at points where the local dimension is ``large'', one expects the measure to be ``fairly well distributed at many scales''. A number of concepts, such as porosity, conical densities, and homogeneity, have been introduced to make quantitative the notion of ``locally well distributed''. The relation between these (and other) such concepts and local dimension has been an active research area in the last decades (see below for references).

So far, each particular problem required an \textit{ad hoc} method to pass from information about the local distribution of measures, to information about mass decay or, in other words, local dimension (though there certainly is overlap among the ideas). The main contribution of this work is to show that in the Euclidean setting, the method of \textit{local entropy averages} provides a general framework that allows to unify, simplify, and extend all the previous results in the area. To the best of our knowledge, local entropy averages were first considered by Llorente and Nicolau \cite{LN04}. The basic result relating them to the local dimension of measures was proved by Hochman and Shmerkin in \cite{HochmanShmerkin11}, as a key step in bounding the dimension of projected measures. It was then further applied to the theory of porosity by Shmerkin in the recent paper \cite{Shmerkin11}.

The basic result on entropy averages is given in Proposition \ref{entropyaverages}. Even though the local entropy average formula may appear more complicated than the definition of local dimension, it is useful in many applications, since entropy takes into account the local distribution of measure. Moreover, as it is an average over scales, it is very effective to study properties which only hold on some proportion of scales.

On the other hand, entropy averages are defined in terms of dyadic partitions, while the geometric information one is interested in is usually in terms of Euclidean balls and cones. A random translation argument often allows to pass between one and the other.

This paper is organized as follows. In the rest of this section, we state the key entropy averages lemma, define the relevant geometric notions of homogeneity, porosity, and conical densities, and state our main results. In Section \ref{sec:preliminaries}, we collect a number of basic technical results that will be required later. Section \ref{sec:proofs} contains the proofs of the main results. Finally, we make some further remarks in Section \ref{sec:remarksandfurtheresults}.

\subsection{Local entropy averages}

\begin{notation}
Let $\cQ = \bigcup_{k \in \N} \cQ_k$ be the collection of all half open \textit{dyadic cubes} of $\R^d$, where $\cQ_k$ are the dyadic cubes $Q$ of side length $\ell(Q) = 2^{-k}$. Given $x \in \R^d$, we let $Q^{k,x}$ be the unique cube from $\cQ_k$ containing $x$. When $a \in \N$, we denote $Q' \prec_a Q$ if $Q \in \cQ_k$, $Q' \in \cQ_{k+a}$ and $Q' \subset Q$.

We keep to the convention that a \emph{measure} refers to a Borel regular locally finite outer measure. Since we are interested in local concepts, we shall further assume that our measures have compact support.
Lebesgue measure on $\R^d$ is denoted by $\cL^d$. For a measure $\mu$ on $[0,1)^d$, $x\in [0,1)^d$ and $k\in \N$, we denote by $\mu_{k,x}$ the normalized restriction of $\mu$ to $Q^{k,x}$. More precisely, if $\mu(Q^{k,x})=0$, then $\mu_{k,x}$ is the trivial measure; otherwise,
$$
\mu_{k,x}(A) = \frac{\mu(A)}{\mu(Q^{k,x})}\quad\text{for all Borel sets } A\subset Q^{k,x}.
$$
\end{notation}

For notational simplicity, in this article logarithms are always to base $2$.

\begin{definition}[Entropy]
The {\em entropy function} is the map $\phi : [0,1] \to \R$, $\varphi(t) := t\log(1/t)$.

If $\mu$ is a measure on $[0,1)^d$, $a \in \N$, and $Q$ is a cube with $\mu(Q)>0$, the $a$-\textit{entropy of $\mu$ in the cube $Q$} is defined by
$$H^a(\mu,Q) = \sum_{Q' \prec_a Q} \varphi\left(\frac{\mu(Q')}{\mu(Q)}\right).$$
\end{definition}

\begin{proposition}[Local entropy averages]
\label{entropyaverages}
Let $\mu$ be a measure on $[0,1)^d$ and $a \in \N$. Then for $\mu$ almost every $x \in [0,1)^d$,
$$\ulocd(\mu,x)  = \limsup_{N \to \infty} \frac{1}{N a} \sum_{k = 1}^N H^a(\mu,Q^{k,x});$$
and
$$\llocd(\mu,x) = \liminf_{N \to \infty} \frac{1}{N a} \sum_{k = 1}^N H^a(\mu,Q^{k,x}).$$
\end{proposition}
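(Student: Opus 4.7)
The plan is to express $H^a(\mu,Q^{k,x})$ as the conditional expectation, with respect to the filtration of dyadic cubes, of a pointwise information increment, reduce the running average to a telescoping sum via a martingale strong law, and finally compare dyadic cubes with Euclidean balls.

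First I would set $I_k(x):=-\log\mu(Q^{k,x})$ and $f_k(x):=I_{k+a}(x)-I_k(x)$. Directly rearranging the definition of $H^a$ yields the pointwise identity $H^a(\mu,Q^{k,x})=\int f_k(y)\,d\mu_{k,x}(y)$, so that $H^a(\mu,Q^{k,x})$ is the conditional expectation of $f_k$ given the $\sigma$-algebra generated by $\cQ_k$. Telescoping
$$\sum_{k=1}^N f_k(x)=\sum_{k=N+1}^{N+a}I_k(x)-\sum_{k=1}^a I_k(x),$$
together with the monotonicity $I_k(x)\leq I_{k+1}(x)$, sandwiches $(Na)^{-1}\sum_{k=1}^N f_k(x)$ between $I_N(x)/N$ and $I_{N+a}(x)/N$ (up to vanishing terms), so that
$$\limsup_{N\to\infty}\frac{1}{Na}\sum_{k=1}^N f_k(x)=\limsup_{k\to\infty}\frac{I_k(x)}{k},$$
and similarly for $\liminf$.

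Next, set $g_k(x):=f_k(x)-H^a(\mu,Q^{k,x})$. For each residue $r$ modulo $a$, the sequence $(g_{ja+r})_{j\geq 0}$ is a martingale difference sequence relative to $(\cQ_{(j+1)a+r})_{j\geq 0}$, since $g_k$ is $\cQ_{k+a}$-measurable with zero conditional mean given $\cQ_k$. The conditional second moment is dominated by
$$\sum_{Q'\prec_a Q^{k,x}}\frac{\mu(Q')}{\mu(Q^{k,x})}\log^2\frac{\mu(Q^{k,x})}{\mu(Q')},$$
which is uniformly bounded by a constant depending only on $a$ and $d$, because the sum has at most $2^{ad}$ terms and $p\mapsto p\log^2(1/p)$ is bounded on $[0,1]$. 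Kolmogorov's strong law of large numbers for martingale differences then gives $N^{-1}\sum_{k=1}^N g_k(x)\to 0$ for $\mu$-a.e. $x$; combined with the previous paragraph, both the $\limsup$ and $\liminf$ of $\frac{1}{Na}\sum_{k=1}^N H^a(\mu,Q^{k,x})$ agree $\mu$-a.e. with those of $-\log\mu(Q^{k,x})/k$.

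Finally, I would identify these dyadic local dimensions with the Euclidean $\ulocd(\mu,x)$ and $\llocd(\mu,x)$: the inclusion $Q^{k,x}\subset B(x,\sqrt d\,2^{-k})$ and the fact that each ball $B(x,r)$ is covered by a bounded number of dyadic cubes of side $\sim r$ give the standard $\mu$-a.e. equality of the two versions of the local dimension. The main obstacle is the martingale step, since the $g_k$ are in general unbounded, so it is essential to exploit the bounded branching $2^{ad}$ of the dyadic descendants of $Q^{k,x}$ to secure a uniform conditional variance; once that is in place, Kolmogorov's criterion applies cleanly and the remaining telescoping and sandwich estimates are routine.
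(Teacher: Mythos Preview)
Your proof is correct and follows precisely the approach the paper indicates: the paper does not give its own argument but remarks that the proposition ``is a direct application of the law of large numbers for martingale differences'' and cites \cite[Theorem~5]{Shmerkin11b}; your decomposition $f_k=H^a(\mu,Q^{k,x})+g_k$ with $g_k$ a martingale difference along each residue class modulo $a$, the uniform conditional second-moment bound coming from the $2^{ad}$-fold branching, and the telescoping identity for $\sum f_k$ are exactly that application. One small caveat on the last step: the inclusion $Q^{k,x}\subset B(x,\sqrt d\,2^{-k})$ indeed gives $\limsup_k I_k(x)/k\ge\ulocd(\mu,x)$ (and likewise for $\liminf$) for every $x$, but the covering of $B(x,r)$ by boundedly many dyadic cubes does not by itself yield the reverse inequality, since the cube of largest mass need not be $Q^{k,x}$; the $\mu$-a.e.\ equality you invoke is nevertheless true and standard, but it requires a short density (or Lemma~\ref{lemma:boundeddyadicdim}-type) argument rather than covering alone.
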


\begin{remark}Proposition \ref{entropyaverages} is a direct application of the law of large numbers for martingale differences. There are many variations of the exact statement, see for example \cite{HochmanShmerkin11,Shmerkin11,Shmerkin11b}. This particular formulation is due to Michael Hochman \cite{Hochman11}, see \cite[Theorem 5]{Shmerkin11b} for a proof. Llorente and Nicolau also considered local entropy averages, but they relied on the Law of the Iterated Logarithm rather than the Law of Large Numbers. As a result, they get sharper results but under stronger assumptions on the measure, such as dyadic doubling, see \cite[Corollary 6.2]{LN04}.
\end{remark}

\subsection{Local homogeneity}

\begin{definition}[Local homogeneity at a given point and scale]
The \textit{local homogeneity} of a measure $\mu$ at $x \in \R^d$ with parameters $\delta,\epsilon,r > 0$ is defined by
\begin{align*}
\hom_{\delta,\epsilon,r}(\mu,x) = \sup\{\card \cB : \,\,& \cB \text{ is a } (\delta r)\text{-packing of } B(x,r)\\
& \text{with } \mu(B) > \epsilon\mu(B(x,5r)) \text{ for all } B \in \cB\}.
\end{align*}
Here a $\delta$-\textit{packing} of a set $A\subset\R^d$ is a disjoint collection of balls of radius $\delta$ centred in $A$.
\end{definition}

It is important that $\mu(B)$ is not compared to $\mu(B(x,r))$ but to the measure of the enlarged ball $B(x,5r)$. The choice of the scaling parameter is not important as $5$ could be replaced with any $1<C<\infty$. See \cite[6.16]{KRS11}.

\begin{notation}For a measure $\mu$ on $\R^d$ and $0 \leq s \leq n$, we denote
\begin{align*}
\Omega^s(\mu) = \{ x: \ulocd(\mu,x) \ge s \} \quad \text{and} \quad \Omega_s(\mu) = \{ x: \llocd(\mu,x) \ge s \}.
\end{align*}
Moreover, write $[N] = \{1,\dots,N\}$ when $N \in \N$.
\end{notation}

In the above form the concept of homogeneity was introduced in \cite{KRS11} as a tool to study porosities and conical densities in Euclidean and more general metric spaces. Relations between homogeneity  and dimension have been considered also in \cite{BeliaevSmirnov02,JarvenpaaJarvenpaa05}.
An intuitive idea behind the homogeneity is the following; If the dimension of $\mu$ is larger than $s$ and if $\delta>0$ then, for typical $x$ and small $r>0$, one expects to find at least $\delta^{-s}$ disjoint sub-balls of $B(x,r)$ of diameter $\delta r$ with relatively large mass. In the main results of \cite{KRS11} this statement is made rigorous in a quantitative way.
Our main result concerning the relation between homogeneity and dimension is the following.

\begin{theorem}
\label{thm:homogeneity}
Let $0 < m < s < d$. Then there exist constants $p = p(m,s,d) > 0$ and $\delta_0 = \delta_0(m,s,d) > 0$ such that, for all $0 < \delta < \delta_0$, there exists $\epsilon = \epsilon(m,s,d,\delta) > 0$ with the following property: If $\mu$ is a measure on $\R^d$, then for $\mu$ almost every $x \in \Omega_s(\mu)$ and for all large enough $N \in \N$, we have
\begin{align}
\label{hommu}
\hom_{\delta,\epsilon,2^{-k}}(\mu,x) \geq \delta^{-m} \quad \text{for at least } pN \text{ values } k \in [N].
\end{align}
Moreover, for $\mu$ almost all $x \in \Omega^s(\mu)$, the estimate \eqref{hommu} holds for infinitely many $N$.
\end{theorem}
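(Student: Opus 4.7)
The plan is to combine Proposition~\ref{entropyaverages} with an entropy concentration estimate that produces many heavy dyadic subcubes, then convert the resulting dyadic packing to a Euclidean packing via a random translation argument. I fix auxiliary exponents $s' := (m+s)/2$ and $m' := (m+s')/2$, lying in $(m,s)$ and depending only on $m,s,d$.

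\textbf{Step 1: positive density of high-entropy scales.} For any fixed $a \in \N$, Proposition~\ref{entropyaverages} gives that for $\mu$-almost every $x \in \Omega_s(\mu)$ and all sufficiently large $N$,
$$
\frac{1}{Na} \sum_{k=1}^{N} H^a(\mu, Q^{k,x}) \geq \frac{s+s'}{2}.
$$
Since $H^a(\mu, Q) \leq ad$ trivially, writing the average as a convex combination of entries bounded by $ad$ (for $k$ in the good set) and by $s'a$ (otherwise) and rearranging yields
$$
|G_N(x)| \geq p N, \qquad G_N(x) := \{k \in [N] : H^a(\mu, Q^{k,x}) \geq s'a\},
$$
with $p := (s-s')/(2(d-s')) = (s-m)/(2(2d-m-s))$ depending only on $m,s,d$.

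\textbf{Step 2: entropy concentration.} I would prove that there exists $a_0 = a_0(m,s,d)$ such that for all $a \geq a_0$ and every cube $Q$ with $\mu(Q)>0$ and $H^a(\mu,Q) \geq s'a$,
$$
\#\{Q' \prec_a Q : \mu(Q') \geq 2^{-(d+1)a}\,\mu(Q)\} \geq 2^{m'a}.
$$
Setting $\epsilon_1 := 2^{-(d+1)a}$, split the defining sum of $H^a(\mu,Q)$ according to whether $\mu(Q')/\mu(Q) \geq \epsilon_1$. Monotonicity of $\varphi$ on $[0,1/e]$ bounds the ``light'' contribution by $2^{ad}\varphi(\epsilon_1) = (d+1)a\cdot 2^{-a}$, while Jensen's inequality bounds the ``heavy'' contribution by $\log N_{\textup{heavy}} + O(1)$. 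Hence $s'a \leq \log N_{\textup{heavy}} + O(1) + o(a)$, which for $a \geq a_0$ yields $N_{\textup{heavy}} \geq 2^{m'a}$.

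\textbf{Step 3: dyadic to Euclidean.} Choose $a = a(\delta) := \lceil \log(1/\delta) \rceil + C_d$ for a dimensional constant $C_d$ ensuring both (i) $2^{m'a} \geq \delta^{-m}$ whenever $\delta < \delta_0(m,s,d)$, and (ii) subcubes at level $a$ below a cube of side $2^{-k}$ have side of order $\delta\cdot 2^{-k}$. By Steps 1--2, for $\mu$-a.e.\ $x \in \Omega_s(\mu)$, all large $N$ and each $k \in G_N(x)$, the cube $Q^{k,x}$ contains at least $\delta^{-m}$ subcubes of side of order $\delta\cdot 2^{-k}$, each of mass at least $\epsilon_1\mu(Q^{k,x})$. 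To transfer this dyadic information into a $(\delta\cdot 2^{-k})$-packing of $B(x,2^{-k})$ with mass lower bound $\epsilon\,\mu(B(x,5\cdot 2^{-k}))$, I would invoke a random translation argument: working with finitely many shifted dyadic grids (sufficient to approximate any Euclidean ball by a cube up to a dimensional factor), for some shift the enclosing cube of side comparable to $2^{-k}$ containing $x$ sits inside $B(x,5\cdot 2^{-k})$, and after possibly discarding a dimensional fraction of heavy subcubes to secure separation one obtains disjoint balls of radius $\delta\cdot 2^{-k}$ inside $B(x,2^{-k})$. Using $\mu(Q^{k,\cdot}) \leq \mu(B(x,5\cdot 2^{-k}))$ and absorbing all dimensional losses into a constant, setting $\epsilon := c_d \,\epsilon_1 = \epsilon(m,s,d,\delta)$ completes the construction.

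\textbf{Main obstacle and the $\Omega^s$ case.} The delicate step is Step~3: one must simultaneously arrange disjointness of balls of radius $\delta\cdot 2^{-k}$, their containment in $B(x,2^{-k})$, and a mass bound against $\mu(B(x,5\cdot 2^{-k}))$, with constants depending cleanly on $m,s,d$ and $\delta$. For the assertion concerning $\Omega^s(\mu)$, the same argument applies after replacing the $\liminf$ by the $\limsup$ version of Proposition~\ref{entropyaverages}: the averaging in Step 1 then yields $|G_N(x)| \geq pN$ for infinitely many $N$, and the rest of the proof is unchanged.
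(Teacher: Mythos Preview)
Your Steps 1 and 2 are correct and give a clean alternative to the paper's route to the dyadic homogeneity statement (Proposition \ref{prop:dyadichomo}). The paper argues by contraposition: if too few scales have large dyadic homogeneity, then the constrained entropy maximization of Lemma \ref{maximalentropy} bounds the local entropy average, and hence the local dimension, below $s$. You instead go forward: large local dimension forces a positive density of scales with entropy $\ge s'a$, and then a light/heavy splitting shows such a scale must carry at least $2^{m'a}$ subcubes of relative mass $\ge 2^{-(d+1)a}$. Both arguments yield the same dyadic conclusion; yours avoids the explicit computation of $h^{M}_{\epsilon,n}$ and is arguably more transparent.

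Step 3, however, has a genuine gap. To pass from $\mu(Q')\ge \epsilon_1\,\mu(Q^{k,x})$ to $\mu(B)\ge \epsilon\,\mu(B(x,5\cdot 2^{-k}))$ you need a \emph{lower} bound $\mu(Q^{k,x})\ge c\,\mu(B(x,5\cdot 2^{-k}))$ at the scales in question, not the trivial upper bound you invoke. No choice of a shifted dyadic grid gives this: the ``$1/3$-trick'' with finitely many shifted grids only guarantees that some cube of side $\approx 2^{-k}$ contains $x$ and sits inside $B(x,5\cdot 2^{-k})$, which again yields $\mu(Q)\le \mu(B(x,5\cdot 2^{-k}))$, the wrong direction. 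For a general measure the ratio $\mu(B(x,5\cdot 2^{-k}))/\mu(Q^{k,x})$ can be arbitrarily large at every scale in $G_N(x)$, so no uniform $\epsilon>0$ can be extracted from your argument as written.

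What is missing is precisely the abundance-of-doubling-scales statement, Lemma \ref{lemma:meandoubling}: after a \emph{single} random translation of the measure (not a finite family of grids), for $\mu$-a.e.\ $x$ one has $\mu(Q^{k,x})\ge c'\,\mu(KQ^{k,x})\ge c'\,\mu(B(x,5\cdot 2^{-k}))$ for a density $>1-p$ of scales $k$. Intersecting this set of scales with your $G_N(x)$ still leaves a positive density, and on those scales your heavy subcubes can be pruned (as in the paper, losing a factor $C_d$) to a $(\delta 2^{-k})$-packing of $B(x,2^{-k})$ with mass $\ge c'\epsilon_1\,\mu(B(x,5\cdot 2^{-k}))$. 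The paper also shifts the level by $h=h(d)$ with $2^h\ge\sqrt d$ so that $Q^{k+h,x}\subset B(x,2^{-k})$, which secures the containment of the packing balls; your sketch does not address this either. With these two ingredients inserted, your Steps 1--2 combine with the paper's Step 3 to give a complete proof.
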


Theorem \ref{thm:homogeneity} is a stronger form of the statement \cite[Theorem 3.7]{KRS11}. The result in \cite{KRS11} yields a sequence of scales with large homogeneity for $\mu$ almost every point $x \in \Omega_s(\mu)$, but does not give any quantitative estimate on the amount of such scales. Moreover, Theorem \ref{thm:homogeneity} also yields information about the local homogeneity of measures at points in $\Omega^s(\mu)$.

Intuitively, our further applications on conical densities and porosities, should follow from Theorem \ref{thm:homogeneity} (this strategy was already used in \cite{KRS11}). With the result on conical densities (Theorem \ref{thm:meanconical} below) this is indeed the case as we apply a dyadic version of Theorem \ref{thm:homogeneity}. Concerning our result on the dimension of mean porous measures (Theorem \ref{thm:meanporosity}) we provide a proof which is independent of Theorem \ref{thm:homogeneity}, but the main idea is still in obtaining a homogeneity estimate.

\subsection{Conical densities}

\begin{notation}[Cones]
Let $m \in \{0,\dots,d-1\}$ and let $G(d,d-m)$ be the set of all $(d-m)$-dimensional linear subspaces of $\R^d$. Fix $x \in \R^d$, $r > 0$, $\theta \in \S^{d-1}$, $0 \leq \alpha \leq 1$ and $V \in G(d,d-m)$. Denote
\begin{align*}
H(x,\theta,\alpha) &= \{y \in \R^d : (y-x) \cdot \theta > \alpha |y-x|\},\\
X(x,V,\alpha) &= \{y \in \R^d : \dist(y-x,V) < \alpha|y-x|\},\\
X(x,r,V,\alpha) &= B(x,r) \cap X(x,V,\alpha).
\end{align*}
\end{notation}

The problem of relating the dimension of sets and measures to their distributions inside small cones has a long history beginning from the work of Besicovitch on the distribution of unrectifiable $1$-sets. The conical density properties of Hausdorff measures have been extensively studied by Marstrand \cite{Marstrand54}, Mattila \cite{Mattila88}, Salli \cite{Salli85} and others and have been applied e.g. in unrectifiability \cite{Mattila95} and removability problems \cite{MattilaParamonov95,Lorent03}. Analogous results for packing type measures were first obtained in \cite{Suomala05,KaenmakiSuomala11,KaenmakiSuomala08}.
Upper density properties of arbitrary measures have been considered in \cite{CKRS10,KRS11}. See also the survey \cite{Kaenmaki10}.

The goal in the theory of conical densities is to obtain information on the relative $\mu$ measure of the cones $X(x,r,V,\alpha)$ or $X(x,r,V,\alpha)\setminus H(x,\theta,\alpha)$, see Figure \ref{dimentpicCone} below.

\begin{figure}[h]
\begin{center}
\includegraphics[scale=0.6]{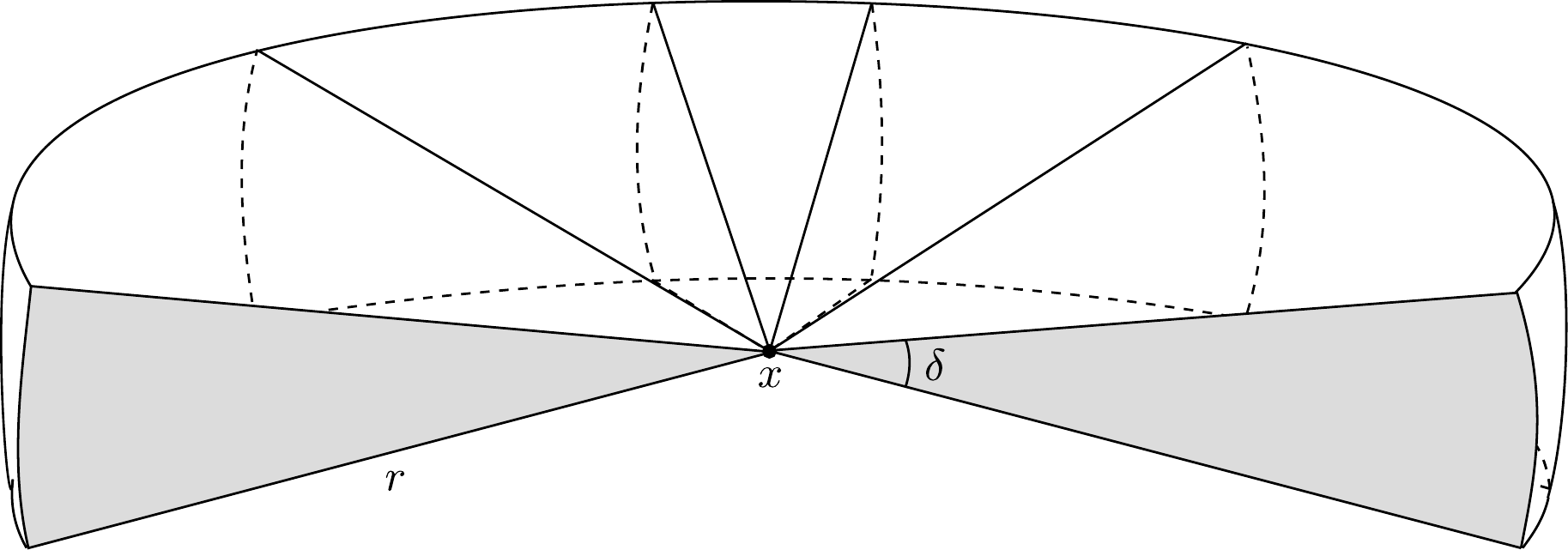}
\end{center}
\caption{Cone $X(x,r,V,\alpha) \setminus H(x,\theta,\eta)$ when $n = 3$, $m = 1$, $\alpha = \sin(\delta/2)$, and when the parameter $\eta$ of the cone $H(x,\theta,\eta)$ is close to $0$.}
\label{dimentpicCone}
\end{figure}

In order to get nontrivial lower bounds, we have to assume that the dimension of $\mu$ is larger than $m$, as the orthogonal complement of $V\in G(n,n-m)$ is $m$-dimensional.

We obtain the following result.
\begin{theorem}
\label{thm:meanconical}
Let $0 < m < s < d$, $m \in \N$ and $0 < \alpha < 1$. Then there exist constants $p = p(m,s,d,\alpha) > 0$ and $c = c(m,s,d,\alpha) > 0$ such that the following holds: If $\mu$ is a measure on $\R^d$, then for $\mu$ almost every $x \in \Omega_s(\mu)$ and for all large enough $N \in \N$ we have
\begin{align}
\label{conical}
\inf_{\theta \in \S^{d-1} \atop V\in G(d,d-m)}\frac{\mu(X(x,2^{-k},V,\alpha) \setminus H(x,\theta,\alpha))}{\mu(B(x,2^{-k}))} > c \quad \text{for at least } pN \text{ values } k \in [N].
\end{align}
For $\mu$ almost all $x \in \Omega^s(\mu)$ the estimate \eqref{conical} holds for infinitely many $N$.
\end{theorem}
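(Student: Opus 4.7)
My plan is to derive Theorem~\ref{thm:meanconical} from a \emph{dyadic} version of Theorem~\ref{thm:homogeneity}, in which balls are replaced by dyadic sub-cubes of $Q^{k,x}$ at a finer scale, and then to argue geometrically that the good cone $X(x,r,V,\alpha)\setminus H(x,\theta,\alpha)$ must capture a fixed fraction of the heavy sub-cubes for every $(V,\theta)$. The first reduction is to prove a dyadic analogue: for $\mu$-a.e.\ $x\in\Omega_s(\mu)$, at a positive proportion of scales $k$, the cube $Q^{k,x}$ contains at least $2^{am'}$ sub-cubes $Q'\prec_a Q^{k,x}$ of relative mass $\gtrsim 2^{-am'}$, where $m'\in(m,s)$ and $a\in\N$ is chosen in terms of $\alpha,m,s,d$. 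This dyadic statement is obtained from Proposition~\ref{entropyaverages} by the same entropy-averages scheme as Theorem~\ref{thm:homogeneity}, combined with a concentration inequality for entropy that converts a lower bound on $H^{a}(\mu,Q^{k,x})$ into the existence of many sub-cubes of substantial mass.

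The geometric step bounds how many of these heavy sub-cubes can be captured by the complement $(B(x,r)\setminus X(x,r,V,\alpha))\cup (H(x,\theta,\alpha)\cap B(x,r))$. The first piece is a cone around the $m$-dimensional subspace $x+V^{\perp}$ and, after a dyadic-annulus decomposition, is contained in a tube of transverse thickness comparable to $r\sqrt{1-\alpha^{2}}$; a direct count then gives an upper bound of $C(\alpha,m,d)\,2^{am}$ sub-cubes at the finer scale. Since $m'>m$ and $a$ may be taken large, this is an $o(1)$-fraction of the $2^{am'}$ heavy sub-cubes. For the cap $H(x,\theta,\alpha)$, I would exploit the central symmetry of $X$ (as a cone around a subspace) together with an averaging argument: iterating the dyadic homogeneity at two consecutive scales, each heavy sub-cube contains a further family of heavy sub-sub-cubes, and the resulting collection is sufficiently spread that it cannot lie entirely in any one half-space through $x$.

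Uniformity in $(V,\theta)\in G(d,d-m)\times\S^{d-1}$ is obtained by compactness. I would cover this product by a finite $\eta$-net, apply the pointwise argument at each net element with a slightly smaller aperture $\alpha'=\alpha-\eta$, and use the inclusions $X(x,r,V_{0},\alpha')\subseteq X(x,r,V,\alpha)$ and $H(x,\theta,\alpha)\subseteq H(x,\theta_{0},\alpha')$ whenever $(V_{0},\theta_{0})$ lies within $\eta$ of $(V,\theta)$ to transfer the uniform lower bound from the net to arbitrary $(V,\theta)$. A standard random-translation (Fubini over shifts of the dyadic lattice) then passes from the dyadic bound on $Q^{k,x}$ to the required Euclidean bound on $B(x,2^{-k})$. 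The $\Omega_s(\mu)$ conclusion follows directly; for $\Omega^s(\mu)$, the same argument is carried out along a subsequence $(N_{j})$ that realises the $\limsup$ in Proposition~\ref{entropyaverages}.

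The main obstacle I anticipate is the cap $H$: unlike the $V^{\perp}$-cone, the cap $H\cap B(x,r)$ can occupy a positive fraction of the volume of $B(x,r)$, so purely $m$-dimensional counting of heavy sub-cubes cannot exclude that they all lie inside $H$. Overcoming this requires the full quantitative strength of Proposition~\ref{entropyaverages}, presumably via iteration at two consecutive scales, to force the distribution of heavy sub-cubes to be approximately symmetric with respect to any affine hyperplane through $x$; combined with the central symmetry of $X$, this yields the desired lower bound on $\mu(X\setminus H)$.
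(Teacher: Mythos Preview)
Your counting argument for the complement of the cone $X(x,r,V,\alpha)$ is incorrect, and this breaks the proof. The complement is indeed a cone around the $m$-plane $x+V^{\perp}$, and it does sit inside a tube of transverse (i.e.\ $V$-direction) thickness $c_\alpha r$ with $c_\alpha=\sqrt{1-\alpha^2}/\alpha$. But covering such a tube by cubes of side $2^{-a}r$ requires roughly $(c_\alpha 2^{a})^{d-m}\cdot 2^{am}\approx C(\alpha,m,d)\,2^{ad}$ cubes, not $2^{am}$: the $V$-direction has $d-m$ coordinates, each contributing a factor $c_\alpha 2^{a}$, which for fixed $\alpha$ and large $a$ is far from $O(1)$. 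So the complement of $X$ may swallow a \emph{fixed fraction} of all sub-cubes, and having $2^{am'}$ heavy sub-cubes with $m'>m$ gives no leverage. The same objection applies, as you already note, to the half-space $H(x,\theta,\alpha)$: it occupies a fixed fraction of the ball, and nothing prevents all heavy sub-cubes of $Q^{k,x}$ from lying on one side of a hyperplane through $x$ (take e.g.\ a measure concentrated near one face of $Q^{k,x}$). Your proposed two-scale iteration does not remedy this, since the sub-sub-cubes are still constrained to lie inside the original heavy sub-cubes, hence on the same side.

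The paper's argument avoids this obstruction by \emph{not} reasoning from $x$ directly. The key geometric lemma (Lemma~\ref{lemma:cubesincones}) shows that among any collection of $\gtrsim 2^{as}$ sub-cubes $Q'\prec_a Q$, there exists one cube $Q_0$ that is \emph{trapped}: for every $(V,\theta)$, some other cube in the collection lies in $X(y,V,\alpha)\setminus H(y,\theta,\alpha)$ for all $y\in Q_0$. The proof is a pigeonhole/tube argument: restricting to cubes whose centres lie near a fixed translate of $V$ forces them into the cone $X(\cdot,V,\alpha)$ automatically, and among many well-separated cubes on such a line, one is ``surrounded'' enough that for every $\theta$ another one escapes $H(\cdot,\theta,\alpha)$. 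The point is that $Q_0$ need not contain $x$. To transfer this to $x$, the paper uses a martingale argument (Lemma~\ref{lemma:blackandwhite}): since at $\gtrsim p_0 N$ scales the cube $Q^{k,x}$ contains a trapped heavy sub-cube of relative mass $\ge\epsilon$, the law of large numbers for martingale differences forces $Q^{k+a,x}$ \emph{itself} to be trapped at a positive proportion of scales. Only then does the conical density estimate at $x$ follow. This two-step mechanism---first find a trapped cube somewhere, then show $x$ lands in one---is the idea your outline is missing.
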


Thus, if the dimension of $\mu$ is larger than $m$, the measure $\mu$ is rather uniformly spread out in all directions for a positive proportion of dyadic scales.

Theorem \ref{thm:meanconical} is a generalisation of \cite[Theorem 4.1]{CKRS10} and \cite[Theorem 5.1]{KRS11}. Concerning the statement about $\Omega_s(\mu)$, the results of \cite{CKRS10} (See Remark 4.7 in \cite{CKRS10}) yield that \eqref{conical} holds for infinitely many $N$ and our result strengthens this to \emph{all} large $N$. Regarding the estimate on $\Omega^s(\mu)$, the results of  \cite{CKRS10,KRS11} do not give any quantitative estimate on the amount of scales where \eqref{conical} holds.

\subsection{Porosity}

\begin{definition}[Porosity at a given point and scale]
Let $\ell\in[d]$. The $\ell$-\textit{porosity} of a set $A \subset \R^d$ at $x \in \R^d$ at scale $r > 0$
is
\begin{align*}\por_\ell(A,x,r) = \sup \{\rho > 0 : \,\,&\exists y_1,\dots,y_\ell \in \R^d, (y_i-x) \cdot (y_j-x) = 0, \\
& B(y_i,\rho r) \subset B(x,r) \setminus A\}.
\end{align*}
The $\ell$-\textit{porosity} of a measure $\mu$ at $x \in \R^d$ with parameters $r,\epsilon > 0$ and $\ell = 1,\dots,d$ is
\begin{align*}\por_\ell(\mu,x,r,\epsilon) = \sup \{\rho > 0 : \,\,&\exists y_1,\dots,y_\ell \in \R^d, (y_i-x) \cdot (y_j-x) = 0, \\
& B(y_i,\rho r) \subset B(x,r) \text{ and } \mu(B(y_i,\rho r)) \leq \epsilon\mu(B(x,r))\}.
\end{align*}
\end{definition}
Thus, $\por_\ell(A,x,r)$ is the supremum of all $\alpha$ such that we can find $\ell$ orthogonal holes in $A$, of relative size at least $\alpha$, inside the reference ball $B(x,r)$. The definition of $\por_\ell(\mu,x,r,\epsilon)$ is similar, except that ``holes'' are now measure theoretical, with a threshold given by $\epsilon$.

\begin{definition}[Mean porosity]\label{porodefi}
Let $0 < \alpha < 1/2$ and $0 < p \leq 1$. The measure $\mu$ is \textit{lower mean $(\ell,\alpha,p)$-porous} at $x \in \R^d$ if, for any $\epsilon > 0$ and large enough $N=N(\e,x) \in \N$,
\begin{equation}\label{porodef}
\por_\ell(\mu,x,2^{-k},\epsilon) > \alpha \quad \text{ for at least } pN \text{ values } k \in [N].
\end{equation}
We say that $\mu$ is \textit{upper mean $(\ell,\alpha,p)$-porous} at $x$ if for all $\epsilon>0$, the condition \eqref{porodef} holds for infinitely many $N\in\N$. Write
\begin{align*}
P_{\ell,\alpha,p} &= P_{\ell,\alpha,p}(\mu) = \{ x \in \R^d : \mu \text{ is lower mean } (\ell,\alpha,p)\text{-porous at } x\},\\
U_{\ell,\alpha,p} & =U_{\ell,\alpha,p}(\mu)=\{x \in \R^d : \mu \text{ is upper mean } (\ell,\alpha,p)\text{-porous at } x\}.
\end{align*}
\end{definition}

The study of the relationship between dimension of sets and their porosities originates from various size-estimates in geometric analysis. See e.g. \cite{Dolzenko67,Vaisala87,KoskelaRohde97}.
The connection between porosity of measures and their dimension has been studied e.g in \cite{EJJ00,BeliaevSmirnov02,KaenmakiSuomala08,BJJKRSS09,KRS11,Shmerkin11}. For further background and references, see the recent surveys \cite{Jarvenpaa10,Shmerkin11b} and also \cite{Rajala09a}. The next theorem unifies and extends many of the earlier results.
\begin{theorem}\label{thm:meanporosity} Let $0 < \alpha < \tfrac{1}{2}$, $0 < p \leq 1$, $\ell\in[d]$ and $\mu$ a measure on $\R^d$. For $\mu$ almost all $x\in P_{\ell,\alpha,p}$, we have
\begin{align}
\label{dimkporous}
\udimloc(\mu,x)\leq d-p\ell+\frac{c}{\log\frac{1}{1-2\alpha}},
\end{align}
where $c =c(d)<\infty$ is a dimensional constant. Moreover, for $\mu$ almost every $x\in U_{\ell,\alpha,p}$ it holds
\begin{align}
\label{dimkporousu}
\ldimloc(\mu,x)\leq d-p\ell+\frac{c}{\log\frac{1}{1-2\alpha}}\,.
\end{align}
\end{theorem}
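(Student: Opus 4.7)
The plan is to apply Proposition \ref{entropyaverages} with an entropy parameter $a\asymp\log\frac{1}{1-2\alpha}$, chosen so that dyadic sub-cubes at depth $a$ have side $\sim (1-2\alpha)\cdot 2^{-k}$, comparable to the ``neck width'' left by a porosity hole. With this choice,
\begin{align*}
\udimloc(\mu,x) = \limsup_{N\to\infty}\frac{1}{Na}\sum_{k=1}^N H^a(\mu,Q^{k,x})
\end{align*}
for $\mu$-a.e.\ $x$, and analogously (with $\liminf$) for $\ldimloc(\mu,x)$.

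The technical heart of the argument is the single-scale entropy estimate
\begin{align*}
H^a(\mu,Q^{k,x}) \leq a(d-\ell) + C_d,
\end{align*}
valid at scales $k$ where the Euclidean porosity $\por_\ell(\mu,x,2^{-k},\epsilon) > \alpha$ holds, provided $\epsilon=\epsilon(d)$ is chosen small enough. To convert the Euclidean condition into this dyadic statement I would use a random translation of the dyadic grid: conditioning on a favourable translation, which occurs with probability at least $\kappa(d)>0$, places $B(x,2^{-k})$ well inside $Q^{k,x}$, and the $\ell$ orthogonal low-mass Euclidean balls furnished by porosity become $\ell$ orthogonal low-mass ``columns'' of depth-$a$ sub-cubes of $Q^{k,x}$. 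The homogeneity-type content alluded to by the authors is then that orthogonality of the $\ell$ directions forces the mass to concentrate on an $\ell$-codimensional family of sub-cubes, which caps the entropy at $a(d-\ell)+C_d$.

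Granting this key estimate, together with the trivial bound $H^a(\mu,Q^{k,x}) \leq ad$ at non-porous scales, the conclusion follows by averaging. By hypothesis, for $\mu$-a.e.\ $x\in P_{\ell,\alpha,p}$ and all large $N$, at least $pN$ of the scales $k\in[N]$ are porous, whence
\begin{align*}
\frac{1}{Na}\sum_{k=1}^N H^a(\mu,Q^{k,x}) \leq (1-p)d + p\left(d-\ell + \frac{C_d}{a}\right) = d - p\ell + \frac{pC_d}{a} \leq d - p\ell + \frac{c}{\log\frac{1}{1-2\alpha}},
\end{align*}
with $c=c(d)$ a dimensional constant. Taking $N\to\infty$ gives \eqref{dimkporous}. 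The upper mean porosity bound \eqref{dimkporousu} follows from the same reasoning with the $\liminf$ form of Proposition \ref{entropyaverages}, applied along the infinite subsequence of scales $N$ at which the porosity condition is met.

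The hard part will be the single-scale entropy estimate of the second paragraph. A naive argument based on the volume of the union of the $\ell$ holes inside $B(x,2^{-k})$ only produces an $O_d(1)$ entropy drop, independent of $a$, far short of the required $a\ell$. Obtaining the sharp drop crucially uses both the chosen depth $a\asymp\log\frac{1}{1-2\alpha}$ (so that the sub-cube side matches the neck width) and the \emph{orthogonality} of the $\ell$ hole directions (so that the restriction is genuinely $\ell$-codimensional rather than merely volumetric); carrying this out rigorously is a delicate combinatorial-geometric step which, together with the random dyadic translation of the second paragraph, constitutes the main technical content of the proof.
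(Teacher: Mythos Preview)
Your overall plan (choose $a\asymp\log\frac{1}{1-2\alpha}$, bound entropy at porous scales, average) matches the paper's, but the central single-scale estimate you state,
\[
H^a(\mu,Q^{k,x}) \leq a(d-\ell) + C_d \quad\text{whenever }\por_\ell(\mu,x,2^{-k},\epsilon)>\alpha,
\]
is not merely ``delicate'': it is false. Porosity at the single point $x$ gives $\ell$ low-mass \emph{balls} of radius $\alpha 2^{-k}$ inside $B(x,2^{-k})$, not $\ell$ orthogonal low-mass slabs, and the complement of $\ell$ balls is a full-dimensional region. For a concrete obstruction take $d=2$, $\ell=2$, $\alpha$ close to $\tfrac12$, let $y_1=(1-\alpha,0)$, $y_2=(0,1-\alpha)$, and let $\mu$ be Lebesgue measure on $B(0,1)\setminus(B(y_1,\alpha)\cup B(y_2,\alpha))$. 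Then $\por_2(\mu,0,1,\epsilon)>\alpha$ for every $\epsilon>0$, yet $\mu$ is uniform on a set of positive area, so for any reasonable cube $Q\ni 0$ one has $H^a(\mu,Q)\asymp ad=2a$, not $a(d-\ell)+O(1)=O(1)$. (Also, $B(x,2^{-k})$ cannot be placed ``well inside'' $Q^{k,x}$ by any translation: it has twice the diameter.) The bound $a(d-\ell)$ genuinely requires porosity at \emph{every} point of a set, as in the covering Lemma~\ref{lemma:kporouscover}, not just at $x$.

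The paper resolves this by working not scale-by-scale but mass-by-mass. At each scale $k$ one decomposes $Q^{k,x}=E^{k,x}\cup P^{k,x}\cup J^{k,x}$ (Lemma~\ref{lemma:decompositionporous}): $E^{k,x}$ is the union of all porosity holes arising from \emph{any} porous sub-cube $Q'\prec_a Q^{k,x}$ (small mass, controlled via the doubling-scale Lemma~\ref{lemma:meandoubling} and the choice of $\epsilon$); $P^{k,x}=Q^{k,x}_{\por}\setminus E^{k,x}$ is a set on which \emph{every} point has set-porosity $\gtrsim\alpha$, so Lemma~\ref{lemma:kporouscover} covers it by $c_12^{a(d-\ell)}$ sub-cubes; and $J^{k,x}$ is the non-porous remainder. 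The entropy then splits accordingly, and the log-sum inequality gives, after averaging over $k$,
\[
\frac{1}{Na}\sum_{k=1}^N H^a(\mu,Q^{k,x}) \le p_N(x)(d-\ell)+(1-p_N(x))d+\frac{O_d(1)}{a}+(\text{error from }E\text{ and non-doubling scales}),
\]
where $p_N(x)=\tfrac1N\sum_k\mu_{k,x}(Q^{k,x}_{\por})$. The final, and essential, ingredient you are missing is the martingale step (Lemma~\ref{lemma:blackandwhite}): it converts ``a proportion $p$ of scales $k$ have $Q^{k+a,x}$ porous'' into $\liminf_N p_N(x)\ge p$. Without this passage from frequency-of-scales to average-mass-fraction, the $p$ and the $(d-\ell)$ never meet, and the desired bound $d-p\ell$ cannot emerge.
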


The claim \eqref{dimkporous} gives a positive answer to the question \cite[Question 6.9]{KRS11}. For $\ell = 1$ 
it was already proven in \cite[Theorem 3.1]{BJJKRSS09}, but the method used in \cite{BJJKRSS09} relies heavily on the co-dimension being one and cannot be used when $\ell>1$. Moreover, the analogous result for porous (rather than mean porous) measures, was obtained in \cite[Theorem 5.2]{KRS11}.

\begin{remark}
The latter claim \eqref{dimkporousu} is the first nontrivial dimension estimate for upper mean porous sets or measures. All the previous works on the dimension of mean porous sets and measures deal with lower mean porosity.
\end{remark}

Theorem \ref{thm:meanporosity} is meaningful and gives the correct rate of convergence for $\alpha\to 1/2$ (it is not hard to see that $1/2$ is the largest possible value of $\alpha$ in a set of positive measure, see \cite[Remark (a) in p.4]{EJJ00}).
The small porosity situation (for $\ell=1$) was addressed in \cite{Shmerkin11} using entropy averages.

\section{Preliminaries}
\label{sec:preliminaries}

\subsection{Maximizing entropy} This section is devoted to maximize entropy in a symbolic setting with certain boundary conditions that will appear in our applications.

\begin{definition}
The \textit{entropy} of the $K$-tuple $(p_1,\dots,p_K)$, $0 \leq p_i \leq 1$, is
$$H(p_1,\dots,p_K) := \sum_{i = 1}^K p_i \log (1/p_i).$$
\end{definition}

We remark that it is {\em not} assumed that $\sum_{i=1}^K p_i =1$. Recall the following estimate, which is an easy consequence of Jensen's inequality:

\begin{lemma}[Log sum inequality]
\label{lemma:logsum}
Let $a_1,\dots,a_K$ and $b_1,\dots,b_K$ be non-negative reals. Then
$$\sum_{i = 1}^K a_i \log \frac{a_i}{b_i} \geq \Big(\sum_{i = 1}^K a_i\Big) \log \frac{\sum_{i = 1}^K a_i}{\sum_{i = 1}^K b_i}.$$
\end{lemma}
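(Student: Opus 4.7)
The plan is to deduce this from Jensen's inequality applied to the convex function $f:[0,\infty)\to\R$ given by $f(x)=x\log x$ (with the usual convention $f(0)=0$). The right convexity-averaging weights are precisely the $b_i$, and the values to average are the ratios $a_i/b_i$. That is the natural move because the quantity $b_i f(a_i/b_i)$ collapses exactly to $a_i\log(a_i/b_i)$, while the weighted average $\tfrac{1}{B}\sum_i b_i\cdot(a_i/b_i)$ simplifies to $A/B$, where I write $A=\sum_i a_i$ and $B=\sum_i b_i$.

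Concretely, assuming for the moment that all $b_i>0$ and $B>0$, convexity of $f$ gives
\begin{equation*}
f\!\left(\frac{\sum_i b_i\,(a_i/b_i)}{B}\right)\;\le\;\frac{1}{B}\sum_{i=1}^K b_i\,f(a_i/b_i),
\end{equation*}
which after the substitutions above reads $f(A/B)\le \tfrac{1}{B}\sum_i a_i\log(a_i/b_i)$, i.e.\ $A\log(A/B)\le\sum_i a_i\log(a_i/b_i)$, as claimed. If $A=0$ then every $a_i=0$, so both sides are $0$ under the convention $0\log 0=0$, and the inequality is trivial.

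The only step that requires a bit of care, and the one I would treat as the main (minor) obstacle, is the handling of indices with $b_i=0$. Adopting the standard conventions $a\log(a/0)=+\infty$ for $a>0$ and $0\log(0/0)=0$, the inequality is automatic whenever some $a_i>0=b_i$, since then the left-hand side is $+\infty$. Otherwise every index with $b_i=0$ also has $a_i=0$ and contributes $0$ on the left, so one can simply discard such indices and apply the Jensen argument above to the remaining ones, which have $b_i>0$; the values of $A$ and $B$ are unchanged, and the conclusion follows.
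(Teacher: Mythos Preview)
Your proof is correct and matches the paper's approach: the paper does not give a detailed argument but simply states that the log sum inequality ``is an easy consequence of Jensen's inequality,'' which is precisely the derivation you carry out (with careful handling of the degenerate $b_i=0$ cases that the paper leaves implicit).
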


\begin{lemma}
\label{trivialentropy}
If $0 < c \leq 1$, then
\begin{align*}
\max_{p_1,\dots,p_K \geq 0 \atop \sum_{i = 1}^K p_i = c} H(p_1,\dots,p_K)= H\Big(\frac{c}{K},\dots,\frac{c}{K}\Big) = c\log(K/c).
\end{align*}
\end{lemma}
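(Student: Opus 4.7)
The plan is to observe two things: the equality $H(c/K,\dots,c/K)=c\log(K/c)$ is a direct computation (each of the $K$ terms contributes $(c/K)\log(K/c)$), so the entire content of the lemma is the upper bound $H(p_1,\dots,p_K)\le c\log(K/c)$ subject to $p_i\ge 0$ and $\sum_i p_i=c$.

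The quickest route is to apply the log sum inequality (Lemma \ref{lemma:logsum}) with $a_i=p_i$ and $b_i=1$ for $i=1,\dots,K$. This immediately yields
\[
\sum_{i=1}^K p_i\log p_i \;\ge\; \Bigl(\sum_{i=1}^K p_i\Bigr)\log\frac{\sum_{i=1}^K p_i}{K} \;=\; c\log(c/K),
\]
and since $H(p_1,\dots,p_K)=-\sum_i p_i\log p_i$, this is exactly $H(p_1,\dots,p_K)\le -c\log(c/K)=c\log(K/c)$. Alternatively, one can invoke concavity of $\varphi(t)=t\log(1/t)$ on $[0,1]$ and apply Jensen's inequality directly: $\frac{1}{K}\sum_i\varphi(p_i)\le \varphi\bigl(\frac{1}{K}\sum_i p_i\bigr)=\varphi(c/K)$, giving the same bound. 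Both routes pin down the maximizer as $p_i=c/K$ via the equality case.

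There is essentially no obstacle here; the only small caveat is the convention $0\log(1/0):=0$, which is consistent with $\varphi$ being continuously extended to $0$, so boundary terms with $p_i=0$ cause no trouble. The constraint $\sum p_i=c\le 1$ (rather than $=1$) is precisely what makes the value $c\log(K/c)$ rather than $\log K$, and this is captured automatically by the log sum inequality because $\sum b_i=K$ appears in the denominator.
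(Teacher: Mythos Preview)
Your proof is correct and essentially identical to the paper's: the paper also applies the log sum inequality with $a_i=p_i$, choosing $b_i=1/K$ rather than your $b_i=1$, which yields the same bound after a one-line rearrangement. Your alternative Jensen argument and the remark on the $0\log(1/0)$ convention are fine but not needed.
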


\begin{proof}
Apply the log sum inequality with $a_i = p_i$ and $b_i = 1/K$.
\end{proof}

\begin{lemma}\label{maximalentropy} Let $M \in \N$, $0 < \epsilon < \tfrac{1}{2M}$ and $n \in \{ 0, \dots, M-1\}$. Then
\begin{align}
\label{maxentvalue}
h^{M}_{\epsilon,n} := \max H|_{\Delta_{\epsilon,n}^M} = (1-n\epsilon)\log\Big(\frac{M-n}{1-n \epsilon}\Big)+n\epsilon\log(1/\epsilon),
\end{align}
where
\begin{align*}
\Delta_{\epsilon,n}^M := \Big\{(q_1,\dots,q_{M-n},p_1,\dots,p_n) : \sum_{j = 1}^{M-n} q_j = 1 - \sum_{i = 1}^n p_i, \, q_j \geq 0 \text{ and }0 \leq p_i \leq \epsilon\Big\}.
\end{align*}
\end{lemma}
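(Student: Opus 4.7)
The plan is to split the entropy according to the two kinds of coordinates and reduce the problem to two simpler optimizations. Write
\[ H(q_1,\dots,q_{M-n},p_1,\dots,p_n) = \sum_{j=1}^{M-n} \varphi(q_j) + \sum_{i=1}^{n} \varphi(p_i), \]
and for each choice of $(p_1,\dots,p_n) \in [0,\epsilon]^n$ let $s := \sum_{i=1}^n p_i \in [0,n\epsilon]$. Then the $q$-coordinates range over $\{q_j \geq 0 : \sum q_j = 1-s\}$, and by Lemma \ref{trivialentropy} applied with $c = 1-s$ and $K = M-n$, the $q$-part is maximized by the uniform choice $q_j = (1-s)/(M-n)$ and attains the value $(1-s)\log\tfrac{M-n}{1-s}$.

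This reduces the whole optimization to maximizing the single function
\[ F(p_1,\dots,p_n) := \sum_{i=1}^{n} \varphi(p_i) + (1-s)\log\frac{M-n}{1-s} \]
over the box $[0,\epsilon]^n$, with $s = \sum p_i$. A direct differentiation (using that $\varphi'(t) = -\log t - \log e$, and chain rule on the second summand) yields, after cancellation of the $\log e$ terms,
\[ \frac{\partial F}{\partial p_i} = \log \frac{1-s}{p_i (M-n)}. \]

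Next I would check that this partial derivative is strictly positive on the interior of the feasible box, which forces the maximum of $F$ to be at the corner $p_1 = \cdots = p_n = \epsilon$. The positivity is equivalent to $p_i (M-n) < 1-s$, and here is where the hypothesis $\epsilon < 1/(2M)$ enters: since $p_i \leq \epsilon$ and $s \leq n\epsilon$, it suffices to verify $\epsilon(M-n) < 1 - n\epsilon$, i.e. $M\epsilon < 1$, which indeed follows from $\epsilon < 1/(2M)$. (At $p_i = 0$ the entropy summand extends continuously by $0$ and the partial derivative tends to $+\infty$, so the monotonicity extends up to the boundary.) Hence $F$ is strictly increasing in each coordinate on $[0,\epsilon]$, and its maximum is attained at $p_i = \epsilon$ for all $i$.

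Plugging $p_i = \epsilon$ (so $s = n\epsilon$) and $q_j = (1-n\epsilon)/(M-n)$ into the original expression yields exactly the claimed value
\[ (1-n\epsilon)\log\frac{M-n}{1-n\epsilon} + n\epsilon\log(1/\epsilon). \]
The only delicate point is the verification that the smallness condition on $\epsilon$ is exactly strong enough to push all the $p_i$ against their upper bound; everything else is a routine reduction via Lemma \ref{trivialentropy}.
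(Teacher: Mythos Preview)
Your proof is correct and follows essentially the same two-step reduction as the paper: first fix the $p_i$ and apply Lemma~\ref{trivialentropy} to optimize the $q_j$, then show the remaining function of $(p_1,\dots,p_n)$ is maximized at the corner $p_i=\epsilon$. The only difference is cosmetic: the paper establishes the monotonicity in step two by splitting $F$ into the two pieces $(1-s)\log\tfrac{M-n}{1-s}$ and $H(p_1,\dots,p_n)$ and checking that each is separately maximized at $p_i=\epsilon$ (the first via monotonicity of $t\mapsto(1-t)\log\tfrac{M-n}{1-t}$, the second by another appeal to Lemma~\ref{trivialentropy}), whereas you differentiate $F$ directly and obtain $\partial F/\partial p_i = \log\tfrac{1-s}{p_i(M-n)}>0$ in one stroke. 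Your route is arguably tidier since it avoids having to check that the two separate maxima coincide, but both arguments use the hypothesis $\epsilon<1/(2M)$ in exactly the same way (to force $M\epsilon<1$).
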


\begin{proof}
If $0 \leq p_i \leq \epsilon$, $i\in[n]$ are fixed, Lemma \ref{trivialentropy} (applied with $K = M-n$ and $c = 1-\sum_{i = 1}^n p_i$) implies that
\begin{equation}\label{j}
\max_{q_j\ge0\text{ and }\sum_{j = 1}^{M-n} q_j = 1-\sum_{i = 1}^n p_i}H(q_1,\dots,q_{M-n})=\Big(1-\sum_{i = 1}^n p_i\Big)\log\Big(\frac{M-n}{1-\sum_{i = 1}^n p_i}\Big).
\end{equation}
Moreover,
\begin{equation}\label{m}
\max_{0\le p_1,\ldots,p_n\le\epsilon}\Big(1-\sum_{i = 1}^n p_i\Big)\log\Big(\frac{M-n}{1-\sum_{i = 1}^n p_i}\Big)=(1-n\epsilon)\log\Big(\frac{M-n}{1-n\epsilon}\Big).
\end{equation}
Indeed, since $p_i \leq \epsilon < \tfrac{1}{2M}$ and $n \leq M-1$, we have $1-\sum_{i = 1}^n p_i \geq 1-n\epsilon > \tfrac12$.
The map $g(t) = (1-t)\log\tfrac{M-n}{1-t}$, $t \in [\tfrac12,1]$, is decreasing, so the maximum value of $g$ on $[1-n\epsilon,1]$ is attained at the left boundary point $1-n\epsilon$, that is, when each $p_i = \epsilon$.

Lemma \ref{trivialentropy} also implies
\begin{equation}\label{v}
\max_{0\le p_1,\ldots,p_n\le\epsilon}H(p_1,\ldots,p_n)=H(\epsilon,\ldots,\epsilon)=n\epsilon\log(1/\epsilon).
\end{equation}
Combining \eqref{j}, \eqref{m} and \eqref{v}, it follows that
\begin{align*}\max H|_{\Delta_{\epsilon,n}^M}
& = \max_{0 \leq p_1,\dots,p_n \leq \epsilon} \max_{q_1,\dots,q_{M-n} \geq 0\atop  \sum_{j = 1}^{M-n} q_j = 1-\sum_{i = 1}^n p_i } \Big(H(q_1,\dots,q_{M-n})+H(p_1,\dots,p_n)\Big)\\
& = \max_{0 \leq p_1,\dots,p_n \leq \epsilon} \Big(1-\sum_{i = 1}^n p_i\Big)\log\Big(\frac{M-n}{1-\sum_{i = 1}^n p_i}\Big)+ H(p_1,\dots,p_n)\\
& = (1-n\epsilon)\log\Big(\frac{M-n}{1-n \epsilon}\Big)+n\epsilon\log(1/\epsilon),
\end{align*}
as claimed.
\end{proof}

\subsection{Abundance of doubling scales}

If $\mu$ is a measure on $\R^d$ and $\omega\in \R^d$, we denote by $\mu_\omega$ the translation of $\mu$ by $\omega$, i.e. $\mu_\omega(A)=\mu(A+\omega)$ for any Borel set $A$. The trick of randomly translating the dyadic lattice has turned out to be extremely useful in many situations. For instance, Nazarov, Treil, and Volberg \cite{NTV03} use it in the proof of their non-homogeneous $Tb$-theorem. In \cite{Shmerkin11}, this technique is for the first time used to study dimension of porous measures.
In this section, we prove the following result on the existence of many ``doubling scales''. Given $K>0$ and a cube $Q$ we denote by $KQ$ the cube with the same center as $Q$ and $K$ times the side length.

\begin{lemma}
\label{lemma:meandoubling}
For any $0 < p < 1$ and $K\in \N$ there exists $c' = c'(d,p,K) > 0$ such that the following holds:
Let $\mu$ be a
measure on $[0,1/2)^d$. Then for $\cL^d$ almost every $\omega\in[0,1/2)^d$,
$$\liminf_{N\rightarrow\infty} \frac{1}{N}\card\{ k \in [N] : \mu_\omega(Q^{k,x}) \geq c'\mu_\omega(K Q^{k,x})\} \ge p$$
for $\mu_\omega$ almost every $x \in \R^d$.
\end{lemma}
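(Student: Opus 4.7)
The plan is to combine a Fubini calculation with the bounded overlap of the family of dilated dyadic cubes. I work on the product space $[0, 1/2)^d \times \R^d$ equipped with the finite measure $d\nu(\omega,x) := d\mu_\omega(x)\, d\omega$, whose total mass is $2^{-d}\mu(\R^d)$. The aim is to estimate, uniformly in the scale $k$, the $\nu$-measure of the bad set
\[
B_k := \bigl\{(\omega,x) \in [0, 1/2)^d \times \R^d : \mu_\omega(Q^{k,x}) < c'\, \mu_\omega(KQ^{k,x})\bigr\}.
\]

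Since $Q^{k,x} = Q$ whenever $x \in Q \in \cQ_k$, and $\mu_\omega(A) = \mu(A+\omega)$, I expand
\begin{align*}
\nu(B_k) &= \sum_{Q \in \cQ_k} \int \mu(Q + \omega)\, \mathbf{1}\bigl[\mu(Q+\omega) < c'\mu(KQ+\omega)\bigr]\, d\omega\\
&\leq c' \sum_{Q \in \cQ_k} \int \mu(KQ + \omega)\, d\omega\\
&= c' \int\!\!\int \#\bigl\{Q\in \cQ_k : y \in KQ+\omega \bigr\}\, d\mu(y)\, d\omega.
\end{align*}
The key geometric observation is that, for any fixed $y$ and $\omega$, at most $\lceil K\rceil^d$ cubes $Q \in \cQ_k$ satisfy $y \in KQ+\omega$, since the centers of such cubes lie in a single cube of side $K\cdot 2^{-k}$ centered at $y-\omega$. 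Consequently $\nu(B_k) \leq c' \lceil K\rceil^d \cdot 2^{-d}\mu(\R^d)$, a fixed multiple of the total $\nu$-mass, independent of $k$.

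Next I choose $c' = c'(d,p,K)$ small enough so that $c'\lceil K\rceil^d < \tfrac{1}{2}(1-p)$. Applying the reverse Fatou lemma to the $[0,1]$-valued Cesaro averages of $\mathbf{1}_{B_k}$ then gives
\[
\int \limsup_{N\to\infty}\frac{1}{N}\#\{k\in[N]:(\omega,x)\in B_k\}\, d\nu \;\leq\; \tfrac{1-p}{2}\cdot 2^{-d}\mu(\R^d).
\]

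To finish, one must upgrade this averaged estimate to the pointwise $\nu$-a.e. bound $\limsup \leq 1-p$, from which Fubini delivers the lemma's pointwise statement for $\cL^d$-a.e.\ $\omega$ and $\mu_\omega$-a.e.\ $x$. The crucial structural feature here is that the event $(\omega,x)\in B_k$ depends on $\omega$ only through $\omega \bmod 2^{-k}$, so distinct dyadic scales involve essentially disjoint binary digits of $\omega$ and are therefore approximately independent. This approximate independence is then exploited via a law-of-large-numbers argument (in the spirit of the random-translation approach of Shmerkin for porous measures) to promote the $L^1$-control into the desired pointwise one. I expect this final upgrade to be the main technical obstacle; the Fubini computation and the choice of $c'$ are routine once the bounded-overlap observation is in place.
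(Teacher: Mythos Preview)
Your Fubini computation and bounded-overlap estimate are correct and give the clean $L^1$ bound $\nu(B_k)\le c'\lceil K\rceil^d\,\nu(\R^d)$, uniformly in $k$. The difficulty is exactly where you place it: the upgrade from this averaged estimate to the pointwise $\nu$-a.e.\ bound. Unfortunately the independence heuristic you invoke does not hold. While it is true that, for fixed $y=x+\omega$, membership in $B_k$ depends on $\omega$ only through $\omega\bmod 2^{-k}$, this means the event at scale $k$ depends on \emph{all} binary digits of $\omega$ beyond position $k$; the events at scales $k$ and $k+1$ therefore share almost all of their $\omega$-randomness rather than involving disjoint digits. More seriously, the event $B_k$ also depends on the measure $\mu$ through both $\mu(Q^{k,x}+\omega)$ and $\mu(KQ^{k,x}+\omega)$, and there is no mechanism forcing these to decorrelate across scales. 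A measure concentrated near a single point, for instance, will produce highly correlated $B_k$'s. An $L^1$ bound of size $\tfrac{1-p}{2}$ on the $\limsup$ gives, by Markov, only that the bad set has at most half the total $\nu$-mass; there is no law-of-large-numbers structure here to push this to measure zero.

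The paper circumvents this by decoupling the two ingredients. First, the random translation is used \emph{only} for a purely combinatorial statement: for $\cL^d$-a.e.\ $\omega$ and $\mu_\omega$-a.e.\ $x$, at a proportion of scales arbitrarily close to $1$ the cube $Q^{k,x}$ lies well inside its ancestor $Q^{k-a,x}$ (so that $KQ^{k,x}\subset Q^{k-a,x}$). This event depends only on the $k$-th $2^a$-adic digit of $x-\omega$, so genuine independence across $k$ holds and the strong law of large numbers applies directly. Second, the doubling estimate $\mu(Q^{k,x})\ge c\,\mu(Q^{k-a,x})$ at a proportion $\ge p$ of scales is proved \emph{deterministically} for any measure, via the telescoping product $\prod_k \mu(Q^{k+a,x})/\mu(Q^{k,x})$ and the elementary fact that the dyadic upper local dimension is at most $d$ almost everywhere. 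Combining the two events, which each occur on a proportion of scales close to $1$, gives the lemma. The point is that the random translation handles only the geometry of the grid, where independence is real, while the mass comparison is handled by a pigeonhole argument that needs no independence at all.
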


For this, we need a few lemmas first.

\begin{notation} If $2^{a-1}> K$ and $Q\in \cQ_k$, let
$$U(Q)=U_{K,a}(Q) = \{x\in Q\,:\,\dist(x,\partial Q)>K 2^{-(k+a)}\},$$
where $\partial Q$ is the boundary of the cube.
Hence $U(Q)$ is obtained from $Q$ by removing the $K$ outer layers of subcubes of $Q$ of generation $k+a$.
\end{notation}

\begin{lemma}\label{lemma:random0}
If $2^{a-1}>K$ and $\mu$ is a measure on $[0,1/2)^d$, then for $\cL^d$ almost every $\omega\in[0,1/2)^d$, the translated measure $\mu_\omega$ satisfies
\begin{equation}\label{nottooclosetoboundary}
\lim_{N\rightarrow\infty}\frac{1}{N}\card\{k\in[N]\,:\,Q^{k+a,x}\subset U(Q^{k,x})\}=(1-K 2^{1-a})^d\quad\text{for } \mu_\omega \text{ almost all } x.
\end{equation}
\end{lemma}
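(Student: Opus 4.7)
The plan is to reformulate the event $Q^{k+a,x} \subset U(Q^{k,x})$ as a dynamical condition on the torus, apply Birkhoff's ergodic theorem for the doubling map, and then transfer the almost-sure conclusion to $\mu_\omega$ via a Fubini argument on the translation parameter $\omega$.

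First I would observe that the event depends only on the position of $x$ inside $Q^{k,x}$ at scale $2^{-(k+a)}$, i.e., on which of the $(2^a)^d$ level-$(k+a)$ dyadic subcubes of $Q^{k,x}$ contains $x$. Encoding this relative position by the coordinatewise fractional part $\{2^k x\} \in [0,1)^d$, the event amounts to $\{2^k x\} \in I$, where $I \subset [0,1)^d$ is a fixed union of level-$a$ subcubes whose Lebesgue measure equals $\theta := (1-K 2^{1-a})^d$ (the hypothesis $2^{a-1} > K$ guarantees $I$ is nonempty). Writing $T(z) = 2z \bmod 1$ for the doubling map on the torus $\mathbb{T}^d$ and $\pi : \R^d \to \mathbb{T}^d$ for the natural projection, this is the identity $\mathbf{1}_{\{Q^{k+a,x} \subset U(Q^{k,x})\}} = \mathbf{1}_I(T^k \pi x)$.

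Second, since $T$ is ergodic with respect to Lebesgue measure on $\mathbb{T}^d$, Birkhoff's ergodic theorem applied to $\mathbf{1}_I$ produces a Lebesgue null set $B \subset \mathbb{T}^d$ outside of which the Cesàro averages $N^{-1} \sum_{k=1}^N \mathbf{1}_I(T^k z)$ converge to $\theta$. Set $\tilde B := \pi^{-1}(B) \subset \R^d$, a $\Z^d$-periodic set of local Lebesgue measure zero. The desired conclusion \eqref{nottooclosetoboundary} will hold at $\mu_\omega$-almost every $x$ as soon as $\mu_\omega(\tilde B) = 0$ for $\cL^d$-almost every $\omega \in [0, 1/2)^d$.

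Finally, I would close the argument with Fubini. Since $\mu_\omega(\tilde B) = \int \mathbf{1}_{\tilde B}(x - \omega)\, d\mu(x)$,
$$\int_{[0,1/2)^d} \mu_\omega(\tilde B)\, d\omega = \int_{\R^d} \cL^d\bigl( (x - [0,1/2)^d) \cap \tilde B\bigr)\, d\mu(x),$$
and the inner integrand vanishes for every $x$ because $\tilde B$ has local Lebesgue measure zero. Hence $\mu_\omega(\tilde B) = 0$ for $\cL^d$-almost every $\omega$, as required. The only place I anticipate needing a bit of care is the explicit combinatorial identification of $I$ and the verification of its measure; beyond this bookkeeping, there are no substantive obstacles.
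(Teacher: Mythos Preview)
Your argument is correct and follows the same two-step skeleton as the paper: first obtain the Lebesgue almost-everywhere frequency statement for the event $Q^{k+a,x}\subset U(Q^{k,x})$, then transfer it to $\mu_\omega$ for $\cL^d$-almost every $\omega$ by Fubini applied to $\mu\times\cL^d|_{[0,1/2)^d}$. The only difference is in the tool used for the first step: the paper appeals to the strong law of large numbers (phrased as normality of Lebesgue-almost every point with respect to the $2^a$-adic grid), whereas you invoke Birkhoff's ergodic theorem for the doubling map on $\mathbb{T}^d$; your formulation handles all scales $k$ at once without splitting into residue classes modulo $a$, but the substance is the same.
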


\begin{proof}
We use the argument from \cite[Lemma 4.3]{Shmerkin11}. By the law of large numbers if $x\in\R^d$, then
$$\lim_{N\rightarrow\infty}\frac{\card\{k\in[N]\,:\,Q^{k+a,x-\omega}\subset U(Q^{k,x-\omega})\}}{N}=(1-K 2^{1-a})^d$$
for $\cL^d$ almost all $\omega\in[0,1/2)^d$
(in other words, $\cL^d$ almost all points $x-\omega$ are normal with respect to the standard $2^a$-adic grid).
From this the claim follows by applying Fubini's theorem to $\mu\times\Le^d|_{[0,1/2)^d}$.
\end{proof}

The following lemma is standard. As we have not been able to find this exact statement in the literature, and the proof is elementary, we include it for completeness.

\begin{lemma} \label{lemma:boundeddyadicdim}
If $\mu$ is a
measure  on $[0,1)^d$, then
\[
\limsup_{N\to\infty} \frac{\log\mu(Q^{N,x})}{-N} \le d
\]
for $\mu$ almost all $x$.
\end{lemma}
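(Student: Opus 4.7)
The plan is a direct Borel--Cantelli argument at dyadic scales. Fix $\epsilon>0$ and, for each $k\in\N$, define the "thin-cube" set
\[
A_k^\epsilon \;=\; \bigl\{x\in[0,1)^d \,:\, \mu(Q^{k,x})<2^{-k(d+\epsilon)}\bigr\}.
\]
I would first observe that $[0,1)^d$ is partitioned by exactly $2^{kd}$ cubes of $\cQ_k$, and since $A_k^\epsilon$ is the union of those cubes $Q\in\cQ_k$ (inside $[0,1)^d$) whose $\mu$-mass is strictly less than $2^{-k(d+\epsilon)}$, we have the trivial pigeonhole bound
\[
\mu(A_k^\epsilon) \;=\; \sum_{\substack{Q\in\cQ_k,\,Q\subset[0,1)^d\\ \mu(Q)<2^{-k(d+\epsilon)}}}\mu(Q)\;\le\; 2^{kd}\cdot 2^{-k(d+\epsilon)}\;=\;2^{-k\epsilon}.
\]
In particular $\sum_{k=1}^\infty\mu(A_k^\epsilon)<\infty$.

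The second step is the Borel--Cantelli conclusion: by the above summability applied to the measure $\mu$ (which is finite on the compact support),
\[
\mu\Bigl(\limsup_{k\to\infty}A_k^\epsilon\Bigr)=0.
\]
Thus for $\mu$-a.e. $x\in[0,1)^d$, there exists $k_0=k_0(x,\epsilon)$ with $\mu(Q^{k,x})\ge 2^{-k(d+\epsilon)}$ for all $k\ge k_0$; in particular $\mu(Q^{k,x})>0$, so taking logarithms,
\[
\frac{\log\mu(Q^{k,x})}{-k}\;\le\;d+\epsilon\qquad\text{for all }k\ge k_0.
\]
Hence $\limsup_{N\to\infty}\log\mu(Q^{N,x})/(-N)\le d+\epsilon$ for $\mu$-a.e.\ $x$.

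Finally, applying this to a countable sequence $\epsilon_n\searrow 0$ and taking the intersection of the corresponding full-measure sets yields $\limsup_{N\to\infty}\log\mu(Q^{N,x})/(-N)\le d$ for $\mu$-almost every $x$, as claimed. There is no real obstacle here: the only mild point to be careful about is that the argument implicitly shows $\mu(Q^{k,x})>0$ eventually, so $\log\mu(Q^{k,x})$ is well defined for large $k$; points where some $Q^{k,x}$ has zero $\mu$-mass automatically form a $\mu$-null set and can be discarded.
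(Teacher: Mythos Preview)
Your proof is correct and is in fact cleaner than the paper's. The paper argues by contradiction: assuming a set $A$ of positive measure where the $\limsup$ exceeds $d+\epsilon$, it passes to a compact subset $A_0$, uses compactness to extract a finite cover by ``thin'' dyadic cubes $Q^{N(x),x}$ with $\mu(Q^{N(x),x})<2^{-N(x)(d+\epsilon)}$, and then runs an inductive argument from the deepest level $n_1$ back up to an arbitrary level $n_0$ to conclude $\mu(A_0\cap Q)\le 2^{-n_0(d+\epsilon)}$ for every $Q\in\cQ_{n_0}$, whence $\mu(A_0)\le 2^{-n_0\epsilon}$ for all $n_0$.

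Your Borel--Cantelli route bypasses the compactness and the downward induction entirely: the single pigeonhole estimate $\mu(A_k^\epsilon)\le 2^{-k\epsilon}$ already packages the same counting information, and summability plus Borel--Cantelli (valid since $\mu$ is finite on $[0,1)^d$) finishes immediately. The two arguments rest on the same combinatorial fact---there are only $2^{kd}$ level-$k$ cubes---but yours converts it into a dimension bound with noticeably less overhead.
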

\begin{proof}
Suppose the claim does not hold. Then there are $\e>0$ and a set $A$ of positive $\mu$ measure such that the following holds: if $x\in A$, then
\[
\limsup_{N\to\infty} \frac{\log\mu(Q^{N,x})}{-N} \ge d+\e.
\]
Take a closed subset $A_0$ of $A$ with $\mu(A_0)>0$. For any $n_0\in\N$ and each $x\in A_0$, we can find $N(x)\ge n_0$ such that $\mu(Q^{N(x),x}) < 2^{-N(x) (d+\e)}$. Since $A_0$ is compact, we can cover it by finitely many of the $Q^{N(x),x}$. Writing $n_1$ for the maximum of the $N(x)$ on this finite set, an easy inductive argument, working from $n_1$ down to $n_0$, shows that $\mu(A_0\cap Q)\le 2^{- n_0 (d+\e)}$ for all cubes $Q\in\mathcal{Q}_{n_0}$, and therefore
\[
\mu(A_0) \le 2^{n_0 d} 2^{- n_0 (d+\e)} = 2^{-\e n_0}.
\]
As $n_0$ was arbitrary, $\mu(A_0)=0$. This contradiction finishes the proof of the lemma.
\end{proof}

\begin{lemma}
\label{lemma:dyadicdoubling}
If $a \in \N$, $0<p<1$ and $\mu$ is a 
measure on $[0,1)^d$, then
\begin{equation}\label{cor:localdim}
\liminf_{N\rightarrow\infty} \frac1N \card\{k\in[N]\,:\,\mu(Q^{k+a,x})\geq c\mu(Q^{k,x})\}\geq p
\end{equation}
for $\mu$ almost all $x\in\R^d$, where $c = c(d,p,a) > 0$.
\end{lemma}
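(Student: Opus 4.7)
The plan is to show that the sum $\sum_{k=1}^N \log(\mu(Q^{k,x})/\mu(Q^{k+a,x}))$ grows at most linearly in $N$ (at rate proportional to $ad$), and then to use this bound to count the scales where the individual terms are large. Since the ratios $\mu(Q^{k+a,x})/\mu(Q^{k,x})$ appearing in the statement are scaling invariant, I would first rescale so that $\mu$ is a probability measure on $[0,1)^d$. For $\mu$-almost every $x$ we have $\mu(Q^{k,x})>0$ for all $k$, so the quantity
$$f_k(x) := \log\frac{\mu(Q^{k,x})}{\mu(Q^{k+a,x})}\ge 0$$
is well defined, and the claim reduces to controlling the number of $k\in[N]$ with $f_k(x)\le\log(1/c)$.

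The key observation is a telescoping identity along each residue class modulo $a$. Splitting $[N]=\bigcup_{r=1}^{a}\{r,r+a,r+2a,\dots\}\cap[N]$, the sum along the $r$-th class collapses to $\log\mu(Q^{r,x})-\log\mu(Q^{r',x})$ where $r'=r+(J_r+1)a\in(N,N+a]$. Using $\mu(Q^{r,x})\le 1$ and the nesting $Q^{N+a,x}\subseteq Q^{r',x}$, each such contribution is bounded by $-\log\mu(Q^{N+a,x})$. Summing over the $a$ residue classes gives
$$\sum_{k=1}^N f_k(x)\le -a\log\mu(Q^{N+a,x}).$$
Now Lemma~\ref{lemma:boundeddyadicdim} provides, for $\mu$-almost every $x$ and all sufficiently large $N$, the estimate $-\log\mu(Q^{N+a,x})\le (N+a)(d+1)$, so the whole sum is at most $a(N+a)(d+1)$.

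To finish, fix $c=c(d,p,a)>0$ small enough that $(1-p)\log(1/c)>2a(d+1)$, for instance $c=2^{-2a(d+1)/(1-p)}$. If the set of scales $k\in[N]$ with $f_k(x)>\log(1/c)$ (equivalently, $\mu(Q^{k+a,x})<c\,\mu(Q^{k,x})$) had cardinality exceeding $(1-p)N$, then
$$\sum_{k=1}^N f_k(x)>(1-p)N\log(1/c)>2a(d+1)N,$$
contradicting the previous bound for $N$ large. Hence for $\mu$-almost every $x$ the set of ``good'' scales has cardinality at least $pN$ for all large $N$, which is the asserted $\liminf$ estimate.

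I do not anticipate a conceptual obstacle, only the bookkeeping involved in the residue-class telescoping and in verifying that the boundary terms $\log\mu(Q^{r,x})$ (for $r\le a$) and the slight overshoot $r'\le N+a$ are harmless; both are absorbed by the extra factor of $2$ hidden in the choice of $c$.
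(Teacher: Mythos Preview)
Your proposal is correct and follows essentially the same approach as the paper: both arguments rest on the telescoping identity $\sum_{k=1}^N f_k(x)=\sum_{\ell=1}^{a}\bigl(\log\mu(Q^{\ell,x})-\log\mu(Q^{N+\ell,x})\bigr)$ combined with Lemma~\ref{lemma:boundeddyadicdim}. The only cosmetic differences are that the paper works multiplicatively and argues by contradiction (obtaining $\limsup_N \log\mu(Q^{N,x})/(-N)\ge 2d$ at a bad point), whereas you work additively and apply Lemma~\ref{lemma:boundeddyadicdim} as a direct upper bound; this accounts for the slightly different constant $c=2^{-2a(d+1)/(1-p)}$ versus the paper's $c=2^{-2ad/(1-p)}$.
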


\begin{proof}
The proof is similar to \cite[Lemma 2.2]{CKRS10}. To simplify notation, we assume that $\mu([0,1)^d)=1$. Choose $c = c(d,p,a) :=2^{-2ad/(1-p)}$. Let $x \in \R^d$ be any point where \eqref{cor:localdim} fails. Then there are arbitrarily large integers $N$ with
$$\card\{k\in[N]\,:\,\mu(Q^{k+a,x})\geq c\mu(Q^{k,x})\} < pN.$$
For such an $N$,
\begin{align*}\left(\mu(Q^{N+a,x})\right)^a \leq \prod_{\ell = 1}^a \frac{\mu(Q^{N+\ell,x})}{\mu(Q^{\ell,x})} = \prod_{k = 1}^N \frac{\mu(Q^{k+a,x})}{\mu(Q^{k,x})} < c^{(1-p)N}.
\end{align*}
Hence
$$\frac{\log\mu(Q^{N+a,x})}{-(N+a)} \geq \frac{\log c^{(1-p)N/a}}{-(N+a)}=\frac{-2d N}{-(N+a)} = \frac{2d N}{N+a},$$
which yields
$$\limsup_{N\rightarrow\infty}\frac{\log(\mu(Q^{N,x}))}{-N} \geq 2d.$$
In view of Lemma \ref{lemma:boundeddyadicdim}, \eqref{cor:localdim} can fail only in a $\mu$ null set, as claimed.
\end{proof}

\begin{proof}[Proof of Lemma \ref{lemma:meandoubling}]
Let $\omega\in [0,1/2)^d$ be such that \eqref{nottooclosetoboundary} holds. By Lemma \ref{lemma:random0} this is the case for $\cL^d$ almost all $\omega\in [0,1/2)^d$.
Choose $a = a(p,K)$ such that $2^{a-1}>K$ and $(1-K2^{1-a})^{d}>p$. Let $p_a = ((1-K2^{1-a})^d + p)/2$. Then $0 < p' := 1+p-p_a < 1$. Let $x \in \R^d$ be a point such that
\begin{align}
\label{a}
Q^{k,x}\subset U(Q^{k-a,x})
\end{align}
for at least $p_a N$ values $k \in [N]$; and
\begin{align}
\label{b}
\mu_\omega(Q^{k,x})\geq c'\mu_\omega(Q^{k-a,x})
\end{align}
for at least $p' N$ values $k \in [N]$, where $c' = c(d,p',a) > 0$ is the constant from Lemma \ref{lemma:dyadicdoubling}.
By Lemmas \ref{lemma:random0} and \ref{lemma:dyadicdoubling}, for $\mu_\omega$ almost all $x$, the conditions \eqref{a}, \eqref{b} are satisfied when $N$ is large.
Then \eqref{a} and \eqref{b} are simultaneously satisfied for at least $pN$ values $k \in [N]$.
Consider such $k \in [N]$. By \eqref{a} and the definition of $U(Q^{k-a,x})$,
we have $K Q^{k,x} \subset Q^{k-a,x}$, see the Figure \ref{dimentpicSetUQ} below. Now \eqref{b} yields
$\mu_\omega(Q^{k,x}) \geq c'\mu_\omega(K Q^{k,x})$
for at least $pN$ values $k \in [N]$ and the claim follows.
\end{proof}

\begin{figure}[h]
\begin{center}
\includegraphics[scale=0.55]{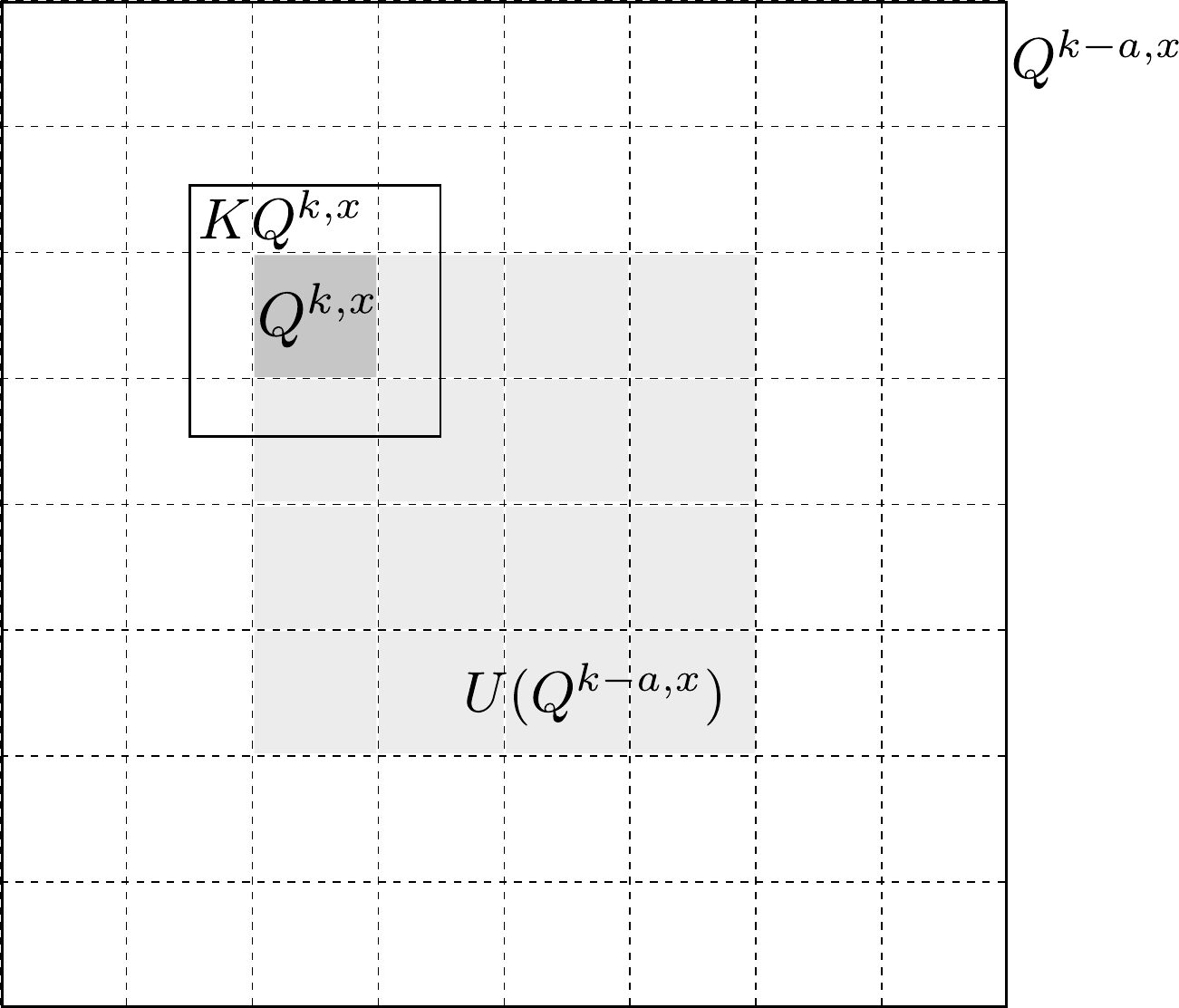}
\end{center}
\caption{We choose the number $a \in \N$ in the proof of Lemma \ref{lemma:meandoubling} so large that whenever a cube $Q \prec_a Q^{k-a,x}$ is contained in $U(Q^{k-a,x})$, then the enlarged cube $KQ \subset Q^{k-a,x}$. In the picture the light gray cubes forms the set $U(Q^{k-a,x})$, the darker gray cube $Q^{k,x} \subset U(Q^{k-a,x})$ and the constants $K = 2$ and $a = 3$.}
\label{dimentpicSetUQ}
\end{figure}

\subsection{Labeling cubes}

\begin{lemma}\label{lemma:blackandwhite}
Let $\mu$ be a Borel probability measure on $[0,1)^d$ and $a \in \N$. Suppose that each $Q \in \cQ$ is labeled either as 'black' or 'white'. Write
$$Q_{\black} := \bigcup_{Q' \prec_a Q \emph{\text{ is black}}} Q'.$$
Then
$$\lim_{N \to \infty} \frac{1}{N} \sum_{k = 1}^N \Big(\mu_{k,x}(Q^{k,x}_{\black}) - \1_{Q^{k,x} \text{ \emph{is black}}}\Big)= 0$$
for $\mu$ almost every $x \in [0,1)^d$.
\end{lemma}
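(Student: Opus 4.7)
The plan is to interpret the summand as a martingale difference plus a telescoping remainder, and then apply the strong law of large numbers for bounded martingale differences. First I would set $X_k(x) := \1_{Q^{k,x}\text{ is black}}$ and let $\cF_k$ denote the $\sigma$-algebra generated by the partition $\cQ_k$. The crucial (essentially tautological) observation is that $Q^{k,x}_\black = \{y \in Q^{k,x} : Q^{k+a,y}\text{ is black}\}$, so the definition of $\mu_{k,x}$ gives, for $\mu$-a.e.\ $x$,
\[
\mu_{k,x}(Q^{k,x}_\black) \;=\; \frac{\mu(Q^{k,x}_\black)}{\mu(Q^{k,x})} \;=\; \E_\mu[X_{k+a} \mid \cF_k](x).
\]
This identity lets me split the summand as
\[
\mu_{k,x}(Q^{k,x}_\black) - X_k(x) \;=\; \bigl(\E_\mu[X_{k+a}\mid\cF_k](x) - X_{k+a}(x)\bigr) + \bigl(X_{k+a}(x) - X_k(x)\bigr).
\]

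The second bracket telescopes: $\sum_{k=1}^N (X_{k+a}(x) - X_k(x)) = \sum_{k=N+1}^{N+a} X_k(x) - \sum_{k=1}^a X_k(x)$, which has absolute value at most $2a$ and hence vanishes after division by $N$, for every $x$. For the first bracket I would partition $[N]$ according to residues modulo $a$. Fixing $r\in\{0,\ldots,a-1\}$ and setting
\[
D_j^{(r)}(x) \;:=\; X_{r+(j+1)a}(x) - \E_\mu\bigl[X_{r+(j+1)a}\bigm|\cF_{r+ja}\bigr](x), \qquad j\ge 0,
\]
yields a bounded ($|D_j^{(r)}|\le 1$) martingale difference sequence with respect to the filtration $(\cF_{r+(j+1)a})_{j\ge0}$. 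The strong law of large numbers for bounded martingale differences (for instance via $L^2$-convergence of $\sum_j j^{-1}D_j^{(r)}$ combined with Kronecker's lemma) then gives $J^{-1}\sum_{j=0}^{J-1}D_j^{(r)}(x) \to 0$ for $\mu$-a.e.\ $x$. Summing over the $a$ residues $r=0,\ldots,a-1$ produces $N^{-1}\sum_{k=1}^N\bigl(X_{k+a}(x) - \E_\mu[X_{k+a}\mid\cF_k](x)\bigr) \to 0$ $\mu$-a.e.

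Combining the two estimates would finish the proof. There is no genuine obstacle here: the argument is really just a matter of spotting the conditional-expectation identity above, after which everything reduces to the telescoping trick and a standard martingale-limit theorem. The only mild care needed is to note that $\mu(Q^{k,x})>0$ for all $k$ at $\mu$-a.e.\ $x$, so that the conditional expectation is well-defined along the relevant sequence of nested cubes.
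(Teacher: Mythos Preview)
Your proposal is correct and follows essentially the same approach as the paper: identify $\mu_{k,x}(Q^{k,x}_{\black})$ as the conditional expectation $\E_\mu[X_{k+a}\mid\cF_k]$, split the index set by residues modulo $a$ to obtain bounded martingale difference sequences, and apply the law of large numbers for martingale differences. The only cosmetic difference is the order of operations---you split the summand into a martingale difference plus a telescoping term at the outset, whereas the paper first proves $\frac1N\sum_k(f_k-g_k)\to0$ for $f_k=\E_\mu[g_k\mid\cF_{k-a}]$ and then handles the index shift $f_k\mapsto f_{k+a}$ at the end---but the substance is identical.
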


\begin{proof}
For $k\ge a$, let
$$
f_k(x) = \mu_{k-a,x}(Q^{k-a,x}_{\black}), \quad\text{and}\quad g_k(x) = \1_{Q^{k,x} \text{ is black}}.
$$
Write $\cB_k$ for the $\sigma$-algebra generated by $\cQ_k$. Then $f_k$ is $\cB_{k-a}$ measurable, and $g_k$ is $\cB_k$ measurable. Moreover, the conditional expectation
$$\E_\mu (g_k|\cB_{k-a})(x) = \frac{\mu(Q^{k-a,x}_{\black})}{\mu(Q^{k-a,x})} = f_k(x).$$

This shows that, for each $j\in[a-1]$, the sequence $(f_{\ell a+j}-g_{\ell a+j})_{\ell \in \N}$ is a uniformly bounded martingale difference sequence (with respect to the filtration $\{\cB_{\ell a+j}\}_{\ell=0}^\infty$). By the law of large numbers for martingale differences (see \cite[Theorem 3 in Chapter VII.9]{Feller71}),
$$
\lim_{N \to \infty} \frac{1}{N} \sum_{\ell = 1}^N (f_{\ell a+j}(x) - g_{\ell a+j}(x)) = 0 \quad \text{for } \mu \text{ almost every } x \in [0,1)^d.
$$
Adding over all $j$, we deduce that
$$\lim_{N \to \infty} \frac{1}{N} \sum_{k = 1}^N (f_{k}(x) - g_{k}(x)) = 0 \quad \text{for } \mu \text{ almost every } x \in [0,1)^d.$$
The statement follows since
$$\frac{1}{N}\sum_{k = 1}^N (f_{k+a}(x) - g_k(x)) - \frac{1}{N}\sum_{k = 1}^N (f_k(x) - g_k(x)) \longrightarrow 0, \quad \text{as }N \to \infty,$$
for every $x$.
\end{proof}

\section{Proofs of the main results}
\label{sec:proofs}

\subsection{General outline}

Although the geometric details differ, the strategy of proof of Theorems \ref{thm:homogeneity}, \ref{thm:meanconical} and \ref{thm:meanporosity} follows a unified pattern, which we can summarize as follows:

\begin{enumerate}
\item Firstly, note that both the hypotheses and the statements are local and translation-invariant, so the measure $\mu$ can be assumed to be supported on $[0,1/2)^d$ and translated by a random vector in $[0,1/2)^d$.
\item Translate the geometric concept under study (homogeneity, conical densities, porosity) into a dyadic analogue.
\item Show that the validity of the original condition at a proportion $p$ of dyadic scales implies the validity of the corresponding dyadic version at a proportion $p'$ of scales, with $p'$ arbitrarily close to $p$. (This step usually depends on Lemma \ref{lemma:meandoubling}, and hence explains the initial random translation).
\item Use the geometric hypothesis (in its dyadic version) to obtain an estimate for the entropy $H(\mu, Q^{k,x})$ at points $x$ and scales $k$ such that the condition is verified (for example, at porous scales).
\item Conclude with a bound on the dimension of the measure from local entropy averages  (Proposition \ref{entropyaverages}).
\end{enumerate}
It may be useful to keep these steps in mind while going through the details of the proofs.

We remark that this general strategy was already used in \cite{Shmerkin11} (see also \cite{Shmerkin11b}). However, the geometric arguments in our current applications are rather more involved at all steps.

\subsection{Proof of Theorem \ref{thm:homogeneity}}

Throughout this section, constants $d\in\N$, $0<m<s<d$, and a measure $\mu$ on $[0,1)^d$ are fixed.

Theorem \ref{thm:homogeneity} will be proved via a corresponding dyadic formulation. For this we require a dyadic version of homogeneity:

\begin{definition}[Dyadic homogeneity]
Fix $a \in \N$. The \textit{dyadic $a$-homogeneity} of a measure $\mu$ at $x \in \R^d$ with parameters $\epsilon > 0$ and $k \in \N$ is defined by
$$\hom^a_{\epsilon,k}(\mu,x) = \card\{Q \prec_a Q^{k,x} : \mu(Q) > \epsilon\mu(Q^{k,x})\}.$$
\end{definition}

The required dyadic version of Theorem \ref{thm:homogeneity} is given in the next proposition.

\begin{proposition}
\label{prop:dyadichomo}
There exist constants $p_0 = p_0 (m,s,d) > 0$
such that for every $a \in\N$ there exists $\epsilon_0 = \epsilon_0(m,s,d,a) > 0$ with the following property: for $\mu$ almost every $x \in \Omega_s(\mu)$, and for all large enough $N \in \N$ we have
\begin{align}
\label{dyahommu}
\hom^a_{\epsilon_0,k}(\mu,x) > 2^{am} \quad \text{for at least } p_0 N \text{ values } k \in [N].
\end{align}
For $\mu$ almost all $x \in \Omega^s(\mu)$ the estimate \eqref{dyahommu} holds for infinitely many $N$.
\end{proposition}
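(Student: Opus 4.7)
The plan is to use Proposition \ref{entropyaverages} to convert the local dimension hypothesis into a lower bound on a Ces\`aro average of dyadic entropies, and then to observe that too many scales at which the dyadic homogeneity bound $\hom^a_{\epsilon_0,k}(\mu,x) > 2^{am}$ fails would force a conflicting upper bound on the same average.

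First I would reduce to $\mu$ supported on $[0,1)^d$ by a straightforward rescaling and translation (the hypothesis and conclusion are both local). Then the main step is the pointwise entropy bound: for $a, d$ fixed, there is a function $\eta$ with $\eta(\epsilon_0)\to 0$ as $\epsilon_0\searrow 0$ such that
\[
\hom^a_{\epsilon_0,k}(\mu,x) \leq 2^{am}\ \Longrightarrow\ H^a(\mu,Q^{k,x}) \leq am + \eta(\epsilon_0).
\]
To prove this, set $K := \hom^a_{\epsilon_0,k}(\mu,x) \leq 2^{am}$. Among the $2^{ad}$ children $Q'\prec_a Q^{k,x}$, at most $K$ have relative mass exceeding $\epsilon_0$, while the remaining $n := 2^{ad}-K$ carry relative mass at most $\epsilon_0$. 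Provided $\epsilon_0 < 2^{-(ad+1)}$, Lemma \ref{maximalentropy} with $M = 2^{ad}$ gives
\[
H^a(\mu,Q^{k,x}) \leq h^{2^{ad}}_{\epsilon_0,n} = (1-n\epsilon_0)\log\Big(\frac{K}{1-n\epsilon_0}\Big) + n\epsilon_0\log(1/\epsilon_0) \leq am + \eta(\epsilon_0),
\]
where one uses $K \leq 2^{am}$ and $n \leq 2^{ad}$ in the last step, with $\eta(\epsilon_0) := -\log(1-2^{ad}\epsilon_0) + 2^{ad}\epsilon_0\log(1/\epsilon_0)$.

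Now set $p_0 := (s-m)/(2(d-m))$ and, given $a \in \N$, choose $\epsilon_0 = \epsilon_0(m,s,d,a)$ small enough that $\eta(\epsilon_0)/a < (s-m)/4$. Suppose for contradiction that there is a positive $\mu$-measure subset of $\Omega_s(\mu)$ on which \eqref{dyahommu} fails, i.e., for which there are arbitrarily large $N$ with $\hom^a_{\epsilon_0,k}(\mu,x) \leq 2^{am}$ on more than $(1-p_0)N$ indices $k \in [N]$. Using the pointwise entropy bound above on these scales, and the universal estimate $H^a(\mu,Q^{k,x}) \leq ad$ (Lemma \ref{trivialentropy} with $c=1$, $K = 2^{ad}$) on the remaining ones, we obtain
\[
\frac{1}{Na}\sum_{k=1}^N H^a(\mu,Q^{k,x}) \leq m + p_0(d-m) + \frac{\eta(\epsilon_0)}{a} < s
\]
along the corresponding subsequence of $N$'s. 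This contradicts Proposition \ref{entropyaverages}, which forces the liminf of the left-hand side to equal $\llocd(\mu,x) \geq s$ for $\mu$-a.e.\ point of $\Omega_s(\mu)$. The $\Omega^s(\mu)$ statement is identical, with liminf replaced by limsup and ``for all large $N$'' by ``for infinitely many $N$''.

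The principal technical step is the entropy bound; Lemma \ref{maximalentropy} is precisely tailored to the ``few heavy, many light'' configuration arising here, so the only genuine work is to track constants and verify that $\eta(\epsilon_0) \to 0$ as $\epsilon_0 \searrow 0$. No random translation is needed at this stage because the question is already dyadic; Lemma \ref{lemma:meandoubling} is reserved for the subsequent Euclidean-ball reduction used to derive Theorem \ref{thm:homogeneity} from Proposition \ref{prop:dyadichomo}.
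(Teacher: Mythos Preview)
Your proposal is correct and follows essentially the same approach as the paper: the same choice $p_0=(s-m)/(2(d-m))$, the same application of Lemma~\ref{maximalentropy} (with $M=2^{ad}$ and the ``few heavy, many light'' split) to bound $H^a(\mu,Q^{k,x})$ at scales where the dyadic homogeneity is at most $2^{am}$, the trivial bound $H^a\le ad$ at the remaining scales, and then Proposition~\ref{entropyaverages} to reach a contradiction. The paper merely packages the entropy estimate into a separate Lemma~\ref{lemma:homogeneitydimension} and parametrizes the smallness of $\epsilon_0$ via an auxiliary $t=(s-p_0 d)/(1-p_0)$ rather than your $\eta(\epsilon_0)$, but the content is identical.
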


First we need a more quantitative statement. We use the notation from Lemma \ref{maximalentropy}:

\begin{lemma}
\label{lemma:homogeneitydimension}
Let $0 < q < 1$, $a \in \N$, $0 < \epsilon < 2^{-ad-1}$. Suppose that $A \subset [0,1)^d$ has the following property: for every $x \in A$ there exist infinitely many $N \in \N$ such that
\begin{align}
\label{dimhomineq}
\hom_{\epsilon,k}^a(\mu,x) \le 2^{am} \quad \text{ for at least } qN \text{ values of } k \in [N].
\end{align}
Then
\begin{align}
\label{locineq1}
\llocd(\mu,x) \leq \frac{q h^{2^{ad}}_{\epsilon,2^{ad}-2^{am}}}{a}+(1-q)d =: s'(m,d,q,a,\epsilon)
\end{align}
for $\mu$ almost every $x \in A$. Moreover, if the inequality \eqref{dimhomineq} holds for all large enough $N$, then
\begin{align}
\label{locineq2}
\ulocd(\mu,x) \leq s'
\end{align}
for $\mu$ almost every $x \in A.$
\end{lemma}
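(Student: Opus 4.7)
The plan is to translate the combinatorial hypothesis on dyadic homogeneity into an upper bound for the $a$-entropy $H^a(\mu, Q^{k,x})$ at each ``good'' scale $k$, combine this with the trivial bound at all remaining scales, and conclude via the local entropy averages formula (Proposition \ref{entropyaverages}).

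First, I would fix $x \in A$ and a scale $k$ at which $\hom^a_{\epsilon,k}(\mu,x) \le 2^{am}$. Set $M := 2^{ad}$ and $n := 2^{ad}-2^{am}$, and consider the normalized mass vector $(\mu(Q')/\mu(Q^{k,x}))_{Q' \prec_a Q^{k,x}}$. This is a probability vector on $M$ atoms with at most $2^{am}$ entries exceeding $\epsilon$; hence at least $n$ of its entries lie in $[0,\epsilon]$. Since $\epsilon < 2^{-ad-1} = 1/(2M)$, this vector belongs, after relabelling the coordinates, to the set $\Delta^M_{\epsilon,n}$ of Lemma \ref{maximalentropy}, so
\[
H^a(\mu, Q^{k,x}) \le h^{2^{ad}}_{\epsilon,\, 2^{ad}-2^{am}}.
\]
At every other scale $k$, I would use the trivial bound $H^a(\mu, Q^{k,x}) \le \log(2^{ad}) = ad$, which holds because $Q^{k,x}$ has only $2^{ad}$ dyadic descendants of generation $k+a$.

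Averaging, for any $N$ at which the combinatorial hypothesis is verified,
\[
\frac{1}{Na}\sum_{k=1}^N H^a(\mu, Q^{k,x}) \;\le\; \frac{1}{Na}\Big( qN\cdot h^{2^{ad}}_{\epsilon,\,2^{ad}-2^{am}} + (1-q)N\cdot ad\Big) \;=\; s'.
\]
By Proposition \ref{entropyaverages}, for $\mu$-almost every $x$ (in particular for $\mu$-a.e.\ $x\in A$) the upper and lower local dimensions equal the $\limsup$ and $\liminf$ of the left-hand side. Under the weaker assumption (inequality \eqref{dimhomineq} for infinitely many $N$), taking $\liminf$ along that subsequence yields \eqref{locineq1}; under the stronger assumption (all sufficiently large $N$), the full $\limsup$ is bounded by $s'$, giving \eqref{locineq2}.

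The only real obstacle, and it is essentially bookkeeping, is recognizing that the normalized mass vector fits exactly the template of Lemma \ref{maximalentropy} with the correct parameters $M=2^{ad}$, $n=2^{ad}-2^{am}$, and checking that the hypothesis $\epsilon<1/(2M)$ is guaranteed by the quantitative assumption $\epsilon<2^{-ad-1}$; everything else reduces to the averaging above and a direct appeal to Proposition \ref{entropyaverages}.
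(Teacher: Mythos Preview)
Your proposal is correct and follows essentially the same argument as the paper: bound $H^a(\mu,Q^{k,x})$ by $h^{2^{ad}}_{\epsilon,\,2^{ad}-2^{am}}$ at the low-homogeneity scales via Lemma~\ref{maximalentropy}, use the trivial bound $ad$ at the remaining scales, average, and invoke Proposition~\ref{entropyaverages}. The only minor point worth making explicit is that ``at least $qN$'' good scales (rather than exactly $qN$) still yields the stated bound because $h^{2^{ad}}_{\epsilon,\,2^{ad}-2^{am}}\le ad$, so shifting scales from the ``bad'' column to the ``good'' one only decreases the average; this is implicit in your inequality but deserves a word.
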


\begin{proof}
Let $x \in A$ and choose $N \in \N$ such that \eqref{dimhomineq} holds. By \eqref{dimhomineq} there exist distinct $k_1,\dots,k_{\lceil qN\rceil} \in [N]$ with $\hom_{\epsilon,k_\ell}^a(\mu,x) \le 2^{am}$ for each $\ell\in\bigl[\lceil qN \rceil\bigr]$.
For any such $\ell$ we may choose $Q_1,Q_2,\dots,Q_{2^{ad}-\lfloor 2^{am}\rfloor} \prec_a Q^{k_\ell,x}$ such that $\mu(Q_i) \leq \epsilon\mu(Q^{k_\ell,x})$ for each $i$.
A direct application of Lemma \ref{maximalentropy} with $M=2^{ad}$ and $n=2^{ad}-\lfloor 2^{am}\rfloor$ then implies
$$H^a(\mu,Q^{k_\ell,x})\leq h^{2^{ad}}_{\epsilon,2^{ad}-2^{am}}, \quad \ell\in \bigl[\lceil qN\rceil\bigr].$$

Moreover, applying Lemma \ref{trivialentropy} with $K = 2^{ad}$ and $c = 1$, we have
$H^a(\mu,Q^{k,x}) \leq ad$ for $k \in [N] \setminus \{k_1,\dots,k_{\lceil qN\rceil}\}$.
Hence
\begin{align*}
\sum_{k = 1}^N H^a(\mu,Q^{k,x}) &= \sum_{\ell = 1}^{\lceil qN\rceil} H^a(\mu,Q^{k_\ell,x}) + \sum_{k \in [N]\atop k \notin \{k_{1},\dots,k_{\lceil qN\rceil}\}} H^a(\mu,Q^{k,x}) \\
& \leq qNh^{2^{ad}}_{\epsilon,2^{ad}-2^{am}} + (1-q)N ad.
\end{align*}
Dividing both sides by $N a$, we have
$$\frac{1}{N a}\sum_{k = 1}^N H^a(\mu,Q^{k,x}) \leq s'.$$
As $x \in A$, there are infinitely many $N$ such that the above holds. Hence \eqref{locineq1} is just an application of local entropy averages (Proposition \ref{entropyaverages}).

Similarly, if the above is satisfied for \textit{all} large enough $N \in \N$, then local entropy averages implies the stronger estimate \eqref{locineq2}.
\end{proof}

\begin{proof}[Proof of Proposition \ref{prop:dyadichomo}] Let $a \in \N$,
$p_0:=\tfrac{s-m}{2(d-m)}$, $t= (s-p_0d)/(1-p_0)$. Then
$\delta := a(t-m) > 0$.
Since (recall \eqref{maxentvalue})
$$\lim_{\epsilon \searrow 0} h^{2^{ad}}_{\epsilon,2^{ad}-2^{am}} = am,$$
we can choose $0 < \epsilon_0 = \epsilon_0(m,s,d,a) < 2^{-ad-1}$ such that
$$h^{2^{ad}}_{\epsilon_0,2^{ad}-2^{am}} < am + \delta = at.$$
Then by the definition of $s'$ (recall \eqref{locineq1}) we have
\begin{align}
\label{dimensionbound}
s'=s'(m,d,1-p_0,a,\epsilon_0) = \frac{(1-p_0) h^{2^{ad}}_{\epsilon_0,2^{ad}-2^{am}}}{a}+p_0d < (1-p_0)t+p_0d = s.
\end{align}

Assume that the first statement of Proposition \ref{prop:dyadichomo} fails. Then
there is a Borel set $A\subset\Omega_s(\mu)$ with $\mu(A)>0$, such that for every $x\in A$, there are infinitely many $N$ with
$$\hom_{\epsilon_0,k}^a(\mu,x) \le 2^{am} \quad \text{ for at least } (1-p_0)N \text{ values of } k \in [N].$$
Lemma \ref{lemma:homogeneitydimension} \eqref{locineq1} with $q = 1-p_0$, and \eqref{dimensionbound} then yield
$\llocd(\mu,x) \leq s' < s$
for $\mu$ almost every $x \in A$.
This is impossible since $\mu(A)>0$ and $A\subset\Omega_s(\mu)$.

The proof for $x\in\Omega^s(\mu)$ is completely analogous, using \eqref{locineq2} in place of \eqref{locineq1}.
\end{proof}

\begin{proof}[Proof of Theorem \ref{thm:homogeneity}]

We first fix various constants. Let $p_0 = p_0(m,s,d)>0$ be the constant from Proposition \ref{prop:dyadichomo} and
$p= p_0/2 > 0$. Let $t = (m+s)/2$
and let $2\le a_0 = a_0(m,s,d)$ be so large that
\begin{equation}\label{choiceof_a}
2^{a_0t}/C_d \geq 2^{a_0m}.
\end{equation}
where $C_d$ is a large dimensional constant (choosing $C_d=17 d^{d/2}$ will do).
Consider $0<\delta<\delta_0 := 2^{1-a_0}$.
Denote by $h=h(d)$ the smallest integer such that $2^{h} \ge \sqrt{d}$. Fix $a \geq a_0$ such that
\begin{equation}\label{choiceofdelta}
2^{-a}\le\delta<2^{1-a}.
\end{equation}
Let $K$ be a constant to be chosen later, depending only on $d$. By localizing, rescaling, and translating, we may assume that $\mu$ is supported on $[0,1)^d$ and satisfies the conclusion of Lemma \ref{lemma:meandoubling} with this value of $K$, i.e.
\begin{equation} \label{eq:apply-mean-doubling-homogeneity}
\liminf_{N\rightarrow\infty}\frac1N \card\{ k \in [N] : \mu(Q^{k,x}) \geq c'\mu(K Q^{k,x})\} >1-p\quad\text{for }\mu\text{ almost every } x,
\end{equation}
where $c'=c(d,1-p/2,K)$.
Further, let $\epsilon = c'\epsilon_0(t,s,d,a)$, where $\epsilon_0(t,s,d,a)$ is the value provided by Proposition \ref{prop:dyadichomo}.

Then, Proposition \ref{prop:dyadichomo} together with \eqref{eq:apply-mean-doubling-homogeneity} implies that for $\mu$ almost all $x \in\Omega_s(\mu)$ and all large $N \in \N$, it holds that
\begin{align}
\label{1}
\hom_{\epsilon_0,k+h}^a(\mu,x) \geq 2^{at}
\end{align}
for at least $2pN$ values $k \in [N]$, and
\begin{align}
\label{2}
\mu(Q^{k+h,x})\geq c'\mu(K Q^{k+h,x})
\end{align}
for at least $(1-p) N$ values $k \in [N]$. The assumptions \eqref{1} and \eqref{2} are then simultaneously satisfied for at least $pN$ values $k \in [N]$. Fix one such $k \in [N]$.

Denote
$$\cH_k := \{Q \prec_a Q^{k+h,x} : \mu(Q) > \epsilon_0\mu(Q^{k+h,x})\}.$$
We define a packing $\cB_k$ of $B(x,2^{-k})$ as follows. First pick $Q_1 \in \cH_k$, and remove all cubes $Q \in \cH_k$ which intersect the ball $B(x_{Q_1},\delta 2^{1-k})$. The estimates \eqref{choiceofdelta}
and a simple geometric inspection show there are at most $C_d$ such cubes (this is the only property of the constant $C_d$ that we use). Now choose $Q_2$ from the remaining cubes in $\cH_k$ and do the same. Hence each time we choose $Q_j$, we remove at most $C_d$ cubes from $\cH_k$. By \eqref{1}, we have $\card \cH_k \geq 2^{at}$, so this process stops when we have chosen the cube $Q_L$ with $L\ge 2^{at}/C_d$.

\begin{figure}[h]
\begin{center}
\includegraphics[scale=0.6]{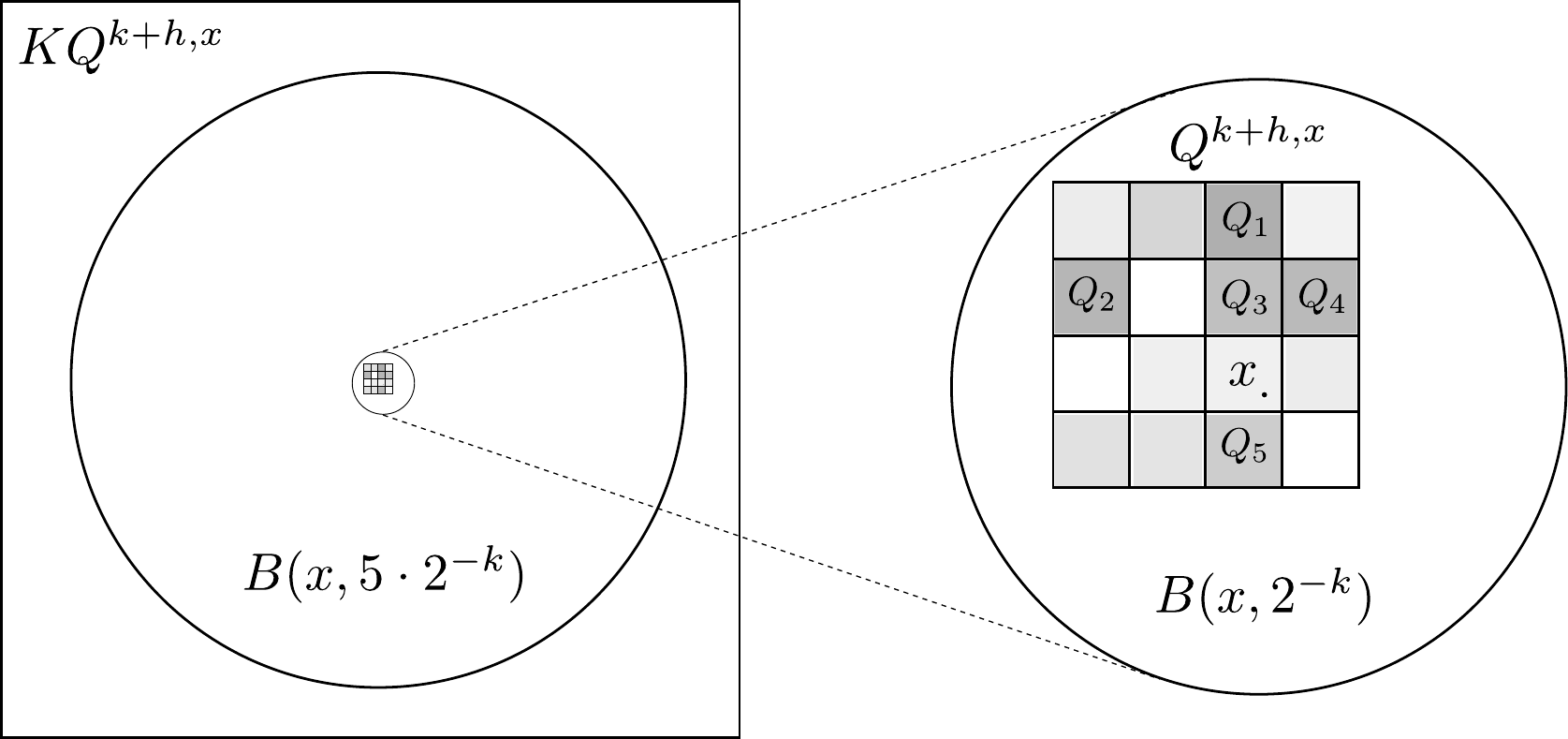}
\end{center}
\caption{We choose the constant $h$ such that $Q^{k+h,x}$ is contained in the reference ball $B(x,2^{-k})$, and then the constant $K$ such that $B(x,5\cdot 2^{-k}) \subset KQ^{k+h,x}$. The darker small cubes $Q_j\prec_a Q^{k+h+a,x}$ in the picture form the set $\cH_k$: each of them have at least $\epsilon_0$-portion of the $\mu$-mass of the cube $Q^{k+h,x}$. This allows us to control the proportion of $\mu$-masses of the balls $B_j \supset Q_j$ inside $B(x,5\cdot 2^{-k})$.}
\label{dimentpicDist}
\end{figure}

By construction, the collection of balls
$$\cB_k := \{B_j = B(x_{Q_j},\delta 2^{-k}) : j = 1,\dots,L\}$$
is a $(\delta 2^{-k})$-packing of $B(x,2^{-k})$. Observe that $|x_{Q_j}-x|< \sqrt{d}\cdot 2^{-h-k-1}<2^{-k}$ by the choice of $h$ and that also $B_j\supset Q_j$ by \eqref{choiceofdelta}.
At this point, we define $K$ so that $K Q^{k+h,x}\supset B(x, 5\cdot 2^{-k})$, see Figure \ref{dimentpicDist}. For example, $K=11 \cdot 2^h$ works.
Hence, using also the definition of $\cH_k$,
$$\mu(B_j) \geq \mu(Q_j) > \epsilon_0\mu(Q^{k+h,x}) \stackrel{\eqref{2}}{\geq} \epsilon\mu(K Q^{k+h,x}) \geq \epsilon \mu(B(x,5 \cdot 2^{-k})).$$
Therefore
$\hom_{\delta,\epsilon,2^{-k}}(\mu,x) \geq \card \cB_k \ge 2^{at}/C_d \geq 2^{am} \geq \delta^{-m}$
for at least $pN$ values of $k \in [N]$, recall \eqref{choiceof_a} and \eqref{choiceofdelta}.
Theorem \ref{thm:homogeneity} is now proved for the points in $\Omega_s(\mu)$.

The last statement of Proposition \ref{prop:dyadichomo} yields for $\mu$ almost every $x \in \Omega^s(\mu)$ infinitely many $N \in \N$ such that \eqref{1} is satisfied for at least $p_0 N$ values $k \in [N]$. Furthermore, for $\mu$ almost every $x$ the property \eqref{2} still hold for these $N$, since it does not require any information about local dimension of $\mu$ at $x$. Hence the same proof goes through, and we have \eqref{hommu} for infinitely many $N \in \N$, as claimed.
\end{proof}

\subsection{Proof of Theorem \ref{thm:meanconical}}
Throughout this section we fix numbers $0<m<s<d$, where $m,d\in\N$, and $0<\alpha<1$. 

The main idea in the proof of Theorem \ref{thm:meanconical} is to combine the dyadic version of the homogeneity estimate from the previous subsection with the following lemma. This lemma
may be seen as a simple discrete version of Theorem \ref{thm:meanconical}.

\begin{lemma}
\label{lemma:cubesincones}
There exists $a_0 = a_0(m,s,d,\alpha) \in \N$ such that for any $a \geq a_0$ and any family $\cC$ of cubes of $Q\prec_a[0,1)^d$ with at least $2^{as}$ elements, there is $Q \in \cC$ such that for any $V \in G(d,d-m)$ and $\theta \in \S^{d-1}$ there exists $Q' \in \cC$ with
\begin{equation}\label{dcd}
Q' \subset X(x,V,\alpha) \setminus H(x,\theta,\alpha)
\end{equation}
for all $x \in Q$.
\end{lemma}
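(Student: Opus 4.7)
Since every cube in $\cC$ has diameter $\sqrt d \cdot 2^{-a}$, by taking $a$ large compared to $\alpha^{-1}$ one checks that the containment $Q' \subset X(x, V, \alpha) \setminus H(x, \theta, \alpha)$ for all $x\in Q$ follows from the containment of centers, $x_{Q'} \in X(x_Q, V, \alpha') \setminus H(x_Q, \theta, \alpha')$, for some slightly smaller $\alpha' < \alpha$, provided $|x_{Q'} - x_Q| \geq r_0 = r_0(d, \alpha)$. The problem is thereby reduced to a statement about the discrete set of cube centers $\{x_Q: Q\in\cC\}$.

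\textbf{Selection of $Q$ via pigeonhole and a centerpoint argument.} Fix a finite $(\alpha/100)$-net $\mathcal V \subset G(d, d-m)$ of cardinality $N_1 = N_1(m, d, \alpha)$, and for each $V \in \mathcal V$ partition $V^\perp \cap [-1,1]^d$ into $O(\alpha^{-m})$ cells of diameter $\alpha/50$. A direct count shows that cubes of $\cC$ whose orthogonal projection onto $V^\perp$ lies in a ``light'' cell (one containing fewer than $c_1 \alpha^m |\cC|$ projections) number at most $O(c_1) |\cC|$. Taking $c_1 \ll 1/N_1$ and union-bounding over $\mathcal V$, at least $|\cC|/2$ cubes are simultaneously ``heavy for every $V \in \mathcal V$''. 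Among these, I would further select $Q$ so that $x_Q$ behaves like a centerpoint of $\{x_{Q'} : Q' \in \cC\}$, in the sense that every closed half-space through $x_Q$ contains a positive fraction of the centers; this is arranged by a standard Tukey-type argument, feasible since $|\cC| \geq 2^{as}$ is large compared to the constants involved.

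\textbf{Verification and conclusion.} Given arbitrary $V \in G(d, d-m)$ and $\theta \in \S^{d-1}$, pick $V_0 \in \mathcal V$ close to $V$. Heaviness of $Q$ for $V_0$ yields at least $c_1 \alpha^m |\cC|$ cubes $Q' \in \cC$ whose $V_0^\perp$-projection lies within $\alpha/25$ of that of $x_Q$; up to the net error these cubes lie in $X(x_Q, V, \alpha')$ whenever $|x_{Q'} - x_Q| \geq r_0$. The centerpoint property of $x_Q$ then guarantees that, intersected with the open half-space opposite $\theta$, this ``tube'' around the $(d-m)$-plane $x_Q + V$ still contains a cube $Q'$ at distance $\geq r_0$ from $x_Q$, because the tube extends on both sides of $x_Q$ along $V$. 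This $Q'$ satisfies the required containment after unwinding the center-reduction step.

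\textbf{Main obstacle.} The hardest part is the simultaneous selection in the middle step: the same $Q$ must be heavy for every $V \in \mathcal V$ \emph{and} satisfy a centerpoint-type condition compatible with the tube structure for each $\theta$. Ensuring that the intersection of ``heavy for every $V$'' with the ``centerpoint'' set is nonempty, and that the resulting tube from $Q$ genuinely sees cubes on both $\theta$-sides, relies on the hypothesis $s > m$ to guarantee $|\cC| \geq 2^{as}$ is large enough to absorb the combinatorial losses inherent in the union bounds; this is precisely what fixes the threshold $a_0 = a_0(m, s, d, \alpha)$.
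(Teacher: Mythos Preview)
Your reduction to centres and the use of a Grassmannian net are sound and parallel the paper's setup. The genuine gap is in the verification step. The global centerpoint property you impose on $x_Q$ says only that every half-space through $x_Q$ contains a fixed fraction of \emph{all} centres in $\cC$; heaviness for $V_0$ says only that the tube around $x_Q+V_0$ contains at least $c_1\alpha^m|\cC|$ centres. Since $c_1\alpha^m$ is tiny (you chose $c_1\ll 1/N_1$), these two conditions do not force the tube and the half-space $\{y:(y-x_Q)\cdot\theta\le 0\}$ to share a cube of $\cC$. Concretely, take $m=d-1$, so that $V$ is a line: nothing in your hypotheses prevents every cube in the tube from lying on one and the same side of $x_Q$ along $V$, and for $\theta$ pointing in that direction every tube-cube then lies inside $H(x_Q,\theta,\alpha)$. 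The sentence ``because the tube extends on both sides of $x_Q$ along $V$'' describes the geometry of the tube, not where the cubes of $\cC$ actually sit inside it. (There is also the lesser issue that the Tukey centerpoint is a point of $\R^d$, not a priori a cube centre, but this is not the main obstruction.)

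The paper handles the half-space condition \emph{within} each heavy tube rather than via a global centerpoint. Inside a tube it extracts a large family of cubes whose centres are $C_1 2^{-a}$-separated---this is precisely where $s>m$ enters, since a heavy tube carries $\gtrsim 2^{a(s-m)}$ cubes---and then invokes the elementary combinatorial fact that among any $C_2=C_2(d,\alpha)$ well-separated points there is one from which, for every $\theta\in\S^{d-1}$, some other point of the family lies outside $H(\cdot,\theta,\alpha)$. Iterating, all but a bounded number of the separated cubes in the tube are ``$i$-good'', hence a proportion at least $1-O(\varepsilon)$ of all cubes in $\cC$ are $i$-good; intersecting over the finitely many $V_i$ in the net then yields a cube good for every $V$. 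Your scheme can be repaired along these lines, but the global Tukey-centerpoint step as written does not close the gap.
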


\begin{proof}
We denote by $C_1,\ldots,C_5$ positive and finite constants that depend only on $d,m$ and $\alpha$. We first let $C_1, C_2$ be constants such that (here $Q,Q'\in\mathcal{C}$ and $V\in G(d,d-m)$)
\begin{enumerate}
\item\label{yks} If $|x_Q-x_{Q'}|>C_1 2^{-a}$ and $V$ hits both $Q$ and $Q'$, then $x_{Q'}\in X(x_Q,V,\alpha/4)$.
\item\label{kaks} If $|x_Q-x_{Q'}|>C_1 2^{-a}$ and $x_{Q'}\in X(x_{Q},V,\alpha/4)$, then $Q'\subset X(y,V,\alpha/2)$ for all $y\in Q$.
\item\label{kolme} If $Q_1,\ldots,Q_{C_2}\prec_a[0,1)^d$ and the centers of $Q_i$ are $C_1 2^{-a}$ apart from each other, then there is $i_0\in[C_2]$ with the property that for each $\theta\in \S^{d-1}$, we have $Q_i\cap H(y,\theta,\alpha)=\emptyset$ for some $i\in[C_2]$ and for all $y\in Q_{i_0}$.
\end{enumerate}
The existence of such $C_1$ and $C_2$ is based on straightforward geometric arguments, see e.g. \cite{ErdosFuredi83}, \cite[Lemmas 2.1 and 2.3]{KaenmakiSuomala11} or \cite[Lemmas 4.2 and 4.3]{CKRS10}.

Since the Grassmannian $G(d,d-m)$ is compact, we may find $V_1,\ldots, V_{C_3}\in G(d,d-m)$ so that each $X(x,V,\alpha)$ contains some $X(x,V_i,\alpha/2)$. Choose a small $\varepsilon=\varepsilon(d,m,\alpha)>0$ to be determined later.
We say that a cube $Q$ in the collection $\cC$ is \emph{$i$-good}, if for all $\theta\in \S^{d-1}$ there is $Q'\in\cC$ such that
\[
Q'\subset X(x,V_i,\alpha/2)\setminus H(x,\theta,\alpha)\quad\text{for all }x\in Q.
\]
Next we estimate the amount of $i$-good cubes in $\cC$. We first choose an $2^{-a-1}$-dense subset $\{y_1,\ldots, y_{C_4 2^{am}}\}\subset \proj_{V_{i}^\perp}([0,1)^d)$ (where $\proj_{V_{i}^\perp}$ is the orthogonal projection onto $V_{i}^\perp$). Consider a $y_j$ for which the tube
\[T:=\{x\in\R^d\,:\,\dist(x,\proj_{V_i}^{-1}(y_j))<2^{-a-1}\}\]
contains at least $\varepsilon 2^{a(s-m)}$ center points of cubes in $\cC$, see Figure \ref{dimentpicTubeCubes} below.

\begin{figure}[h]
\begin{center}
\includegraphics[scale=0.3]{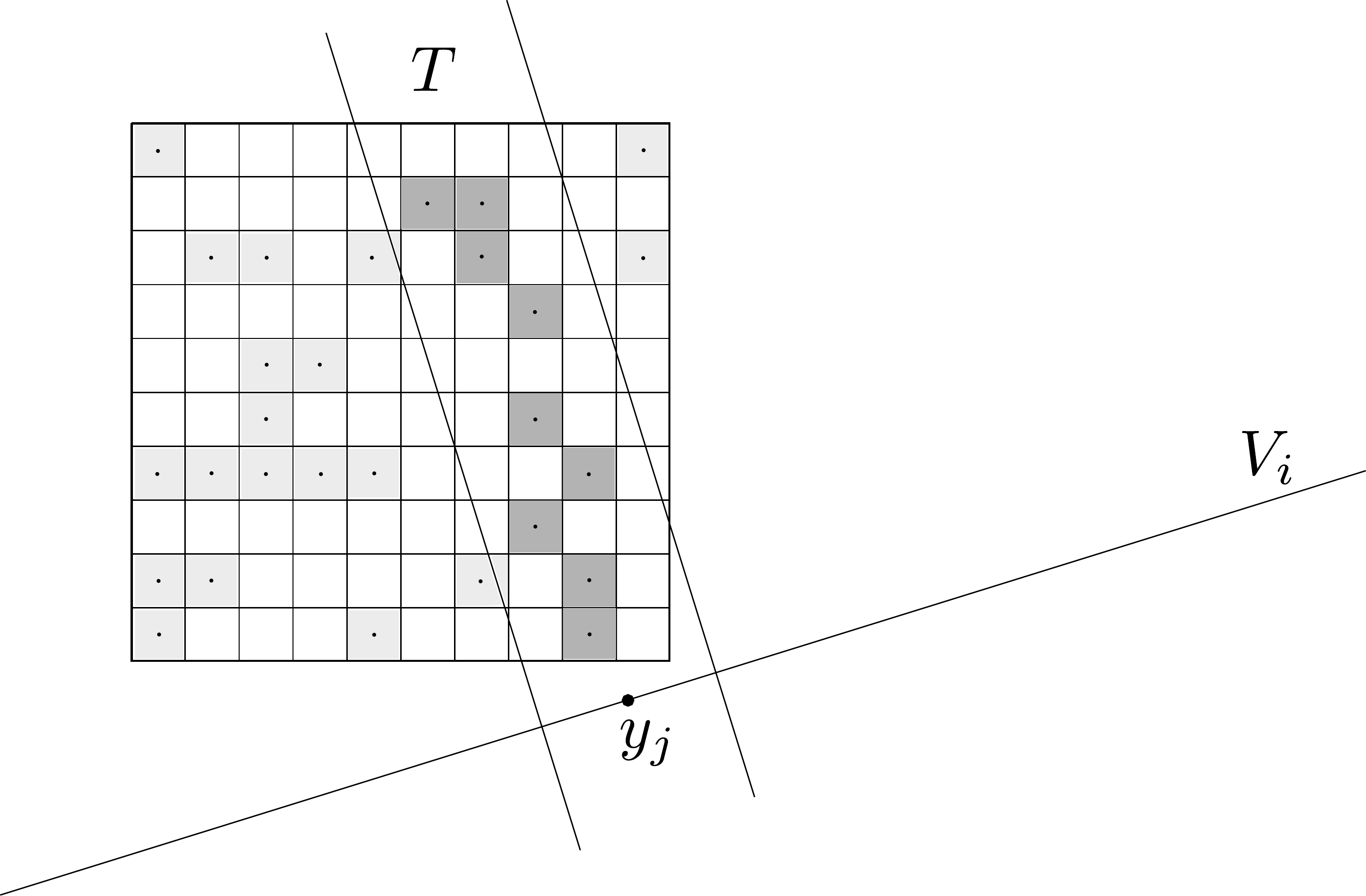}
\end{center}
\caption{The $y_j$ is located at $V_i$ such that the tube $T$ contains at least $\varepsilon 2^{a(s-m)}$ center points of cubes from the collection $\cC$.}
\label{dimentpicTubeCubes}
\end{figure}

Observe that, for all but at most $C_4 \varepsilon 2^{as}$ cubes $Q\in\cC$, the center $x_Q$ is contained in such a $T$. Let $\cC_{T}$ be the cubes in $\cC$ whose centers lie in $T$. For each collection of cubes $Q\prec_a[0,1)^d$ with $M$ elements, there is a sub-collection with at least $M/(2C_1+1)^d$ elements such that the midpoints are $C_1 2^{-a}$ apart from each other. We let $\cC_1$ be a maximal subcollection of $\cC_T$ with this property. We continue inductively; If the collection $\cC_T\setminus\cup_{k<n}\cC_k$ has at least $C_2(2C_1+1)^d/\varepsilon$ elements, we let $\cC_n$ be a maximal subcollection of $\cC_T\setminus\cup_{k<n}\cC_k$ with the central points $C_1 2^{-a}$ apart from each other. This process terminates when the cardinality of $\cC_T\setminus\cup_{k<n}\cC_k$ is at most $C_2(2C_1+1)^d/\varepsilon$ which, on the other hand,  is less than $\varepsilon$ times the cardinality of $\cC_T$ provided $2^{a(s-m)}\ge\varepsilon^{-3}C_2(2C_1+1)^d$ (this determines the choice of $a_0$). Furthermore, using \ref{yks}--\ref{kolme}, we see that all but $C_2$ of the elements of each $\cC_n$ are $i$-good and that $C_2$ is less than $\varepsilon$ times the cardinality of $\cC_n$.

To summarize, we have seen that a proportion at least $(1-C_4\varepsilon)$ of the cubes in $\cC$ belong to some $\cC_T$ for which there are at least $\varepsilon 2^{a(m-s)}$ elements and, among these, a proportion of at least $(1-\varepsilon)$ belong to a $\cC_n$ which has a proportion $(1-\varepsilon)$ of $i$-good cubes. Thus, at least a proportion $(1-C_5\varepsilon)$ of the cubes of $\cC$ are $i$-good, and this holds for all $i$. Whence, choosing $\varepsilon<1/(C_3 C_5)$, there is at least one cube $Q\in\cC$ which is $i$-good for all $V_1,\ldots,V_{C_3}$.
The claim \eqref{dcd} holds true for this $Q$.
\end{proof}

We can apply this lemma to gain information on the existence of trapped cubes:

\begin{definition}[Trapped cubes]
Let $a,k \in \N$, $Q \in \cQ$ and $\epsilon > 0$. A cube $Q' \prec_a Q$ with $\mu(Q') > \epsilon\mu(Q)$ is $\epsilon$-\textit{trapped} if, for any $V \in G(d,d-m)$ and $\theta \in \S^{d-1}$, there exists $Q'' \prec_a Q$ with $\mu(Q'') > \epsilon\mu(Q)$ and
$$Q'' \subset X(x,V,\alpha) \setminus H(x,\theta,\alpha)\quad\text{for all }x\in Q',$$
see Figure \ref{dimentpicTrappedCubes}. Once $\epsilon$ and $a$ are fixed, denote
$$Q_{\trap} := \bigcup_{Q' \prec_a Q \text{ is } \epsilon\text{-trapped}}Q'.$$
\end{definition}

\begin{lemma}
\label{lemma:conescales}
There exist $a=a(m,s,d,\alpha)$, $\epsilon = \epsilon(m,s,d,\alpha) > 0$, and $p' = p'(m,s,d,\alpha) > 0$ such that for $\mu$ almost every $x\in \Omega_s(\mu)$ and for all large enough $N \in \N$, the cube $Q^{k,x}$ is $\epsilon$-trapped for at least $p' N$ values $k \in [N]$. For $\mu$ almost every $x \in \Omega^s(\mu)$ there exist infinitely many $N \in \N$ such that the cube $Q^{k,x}$ is $\epsilon$-trapped for at least $p' N$ values of $k \in [N]$.
\end{lemma}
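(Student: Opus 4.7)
The plan is to combine the dyadic homogeneity estimate (Proposition~\ref{prop:dyadichomo}), the combinatorial cone lemma (Lemma~\ref{lemma:cubesincones}), and the martingale comparison of Lemma~\ref{lemma:blackandwhite}, following the general strategy outlined at the beginning of this section.

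First I would fix an intermediate exponent $t$ with $m<t<s$ and pick $a\ge a_0(m,t,d,\alpha)$ as supplied by Lemma~\ref{lemma:cubesincones}. Applying Proposition~\ref{prop:dyadichomo} with the pair $(t,s)$ in place of $(m,s)$ yields constants $p_0>0$ and $\epsilon_0>0$ (depending only on $m,s,d,\alpha$) such that, for $\mu$ almost every $x\in\Omega_s(\mu)$ and all large $N$, the set of ``rich'' scales
\[
R(x,N):=\{k\in[N]:\hom^a_{\epsilon_0,k}(\mu,x)>2^{at}\}
\]
has at least $p_0 N$ elements. For any $k\in R(x,N)$ the family
\[
\cC_k:=\{Q\prec_a Q^{k,x}:\mu(Q)>\epsilon_0\mu(Q^{k,x})\}
\]
has more than $2^{at}$ elements, so Lemma~\ref{lemma:cubesincones} produces a cube $Q^*\in\cC_k$ such that for every $V\in G(d,d-m)$ and every $\theta\in\S^{d-1}$ some $Q''\in\cC_k$ satisfies $Q''\subset X(y,V,\alpha)\setminus H(y,\theta,\alpha)$ for all $y\in Q^*$. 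Since both $Q^*$ and $Q''$ lie in $\cC_k$, this is exactly the condition that $Q^*$ is $\epsilon_0$-trapped in $Q^{k,x}$, and hence $\mu_{k,x}(Q^{k,x}_{\trap})>\epsilon_0$.

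Next I would apply Lemma~\ref{lemma:blackandwhite} to the labeling in which a cube $Q\in\cQ_n$ is declared \emph{black} precisely when it is $\epsilon_0$-trapped in its $a$-parent $Q^{n-a,y}\in\cQ_{n-a}$ (uniquely determined by $Q$), so that $Q^{k,x}_{\black}=Q^{k,x}_{\trap}$ for every $k$ and $x$. Combining the previous paragraph with the lower bound $\card R(x,N)\ge p_0 N$ yields, for $\mu$ almost every $x\in\Omega_s(\mu)$ and all large $N$,
\[
\frac{1}{N}\sum_{k=1}^N\mu_{k,x}(Q^{k,x}_{\black})\ \ge\ \frac{\epsilon_0\,\card R(x,N)}{N}\ \ge\ p_0\epsilon_0,
\]
and Lemma~\ref{lemma:blackandwhite} then gives
\[
\liminf_{N\to\infty}\frac{1}{N}\,\card\{k\in[N]:Q^{k,x}\ \text{is $\epsilon_0$-trapped}\}\ \ge\ p_0\epsilon_0.
\]
This proves the lemma with $\epsilon:=\epsilon_0$ and $p':=p_0\epsilon_0/2$. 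The $x\in\Omega^s(\mu)$ case follows by the same argument with ``for all large $N$'' and $\liminf$ replaced by ``for infinitely many $N$'' and $\limsup$, using the corresponding clause of Proposition~\ref{prop:dyadichomo} and the fact that Lemma~\ref{lemma:blackandwhite} is an a.e.\ equality (hence preserves $\limsup$ just as well as $\liminf$).

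The main obstacle is the passage from the existence of a \emph{single} $\epsilon_0$-trapped subcube per rich parent, which is all that the purely geometric Lemma~\ref{lemma:cubesincones} delivers, to a positive density of scales at which $Q^{k,x}$ \emph{itself} is trapped. The key observation is that the definition of $\cC_k$ guarantees the mass lower bound $\mu(Q^*)>\epsilon_0\mu(Q^{k,x})$, so a uniformly positive proportion of $\mu$-mass lands in trapped subcubes at every rich scale; Lemma~\ref{lemma:blackandwhite} is then precisely the martingale tool that converts this mass-proportion statement into the required lower bound on the frequency of trapped (black) scales.
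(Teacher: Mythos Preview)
Your proposal is correct and follows essentially the same route as the paper: fix an intermediate exponent $t\in(m,s)$, take $a\ge a_0(m,t,d,\alpha)$ from Lemma~\ref{lemma:cubesincones} and $\epsilon_0=\epsilon_0(t,s,d,a)$, $p_0=p_0(t,s,d)$ from Proposition~\ref{prop:dyadichomo}, observe that at each rich scale the trapped cube produced by Lemma~\ref{lemma:cubesincones} carries mass $>\epsilon_0\mu(Q^{k,x})$, and then invoke Lemma~\ref{lemma:blackandwhite} to convert the mass estimate $\sum_k\mu_{k,x}(Q^{k,x}_{\trap})\ge p_0\epsilon_0 N$ into the frequency bound with $p'=p_0\epsilon_0/2$. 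The paper's proof makes the identical choices (with the concrete value $t=(m+s)/2$) and the same final constant $p'=\epsilon p_0/2$.
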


\begin{proof}
Let $t=(m+s)/2$ and $a = a_0(m,t,d,\alpha)$ be the constant from Lemma \ref{lemma:cubesincones} 
and $\epsilon = \epsilon_0(t,s,d,a)$ be the constant from Proposition \ref{prop:dyadichomo}. Finally, let $p' = \epsilon p_0/2$, where $p_0 = p_0(t,s,d)$ is the constant from Proposition \ref{prop:dyadichomo}. Let $x \in \R^d$ and $k \in \N$ such that
\begin{equation}\label{traphom}
\hom^a_{\epsilon,k}(\mu,x) \geq 2^{at}.
\end{equation}
Then by Lemma \ref{lemma:cubesincones} there exists an $\epsilon$-trapped cube $Q \prec_a Q^{k,x}$. 
Proposition \ref{prop:dyadichomo} implies that, for $\mu$ almost every $x \in \Omega_s(\mu)$ and for all large enough $N \in \N$, \eqref{traphom} holds for at least $p_0 N$ values $k \in [N]$. For these $x$ and $N$, we have
\begin{equation}\label{sumtrap}
\sum_{k = 1}^N \mu_{k,x}(Q^{k,x}_{\trap}) > \epsilon p_0 N = 2 p'N.
\end{equation}
On the other hand, Lemma \ref{lemma:blackandwhite} with 'black' = 'trapped' implies for $\mu$ almost every $x \in \R^d$ that
$$\lim_{N \to \infty} \frac{1}{N} \sum_{k = 1}^N \left(\mu_{k,x}(Q^{k,x}_{\trap}) - \1_{Q^{k,x} \text{ is trapped}}\right) = 0.$$
We conclude that for $\mu$ almost every $x \in \Omega_s(\mu)$, and for all large enough $N \in \N$, we have
$$\sum_{k = 1}^N \1_{Q^{k,x} \text{ is trapped}} > p'N,$$
which is precisely what we wanted.

For the points in $\Omega^s(\mu)$ the argument is symmetric as Proposition \ref{prop:dyadichomo} implies \eqref{sumtrap} for infinitely many $N$.
\end{proof}

\begin{figure}[h]
\begin{center}
\includegraphics[scale=0.5]{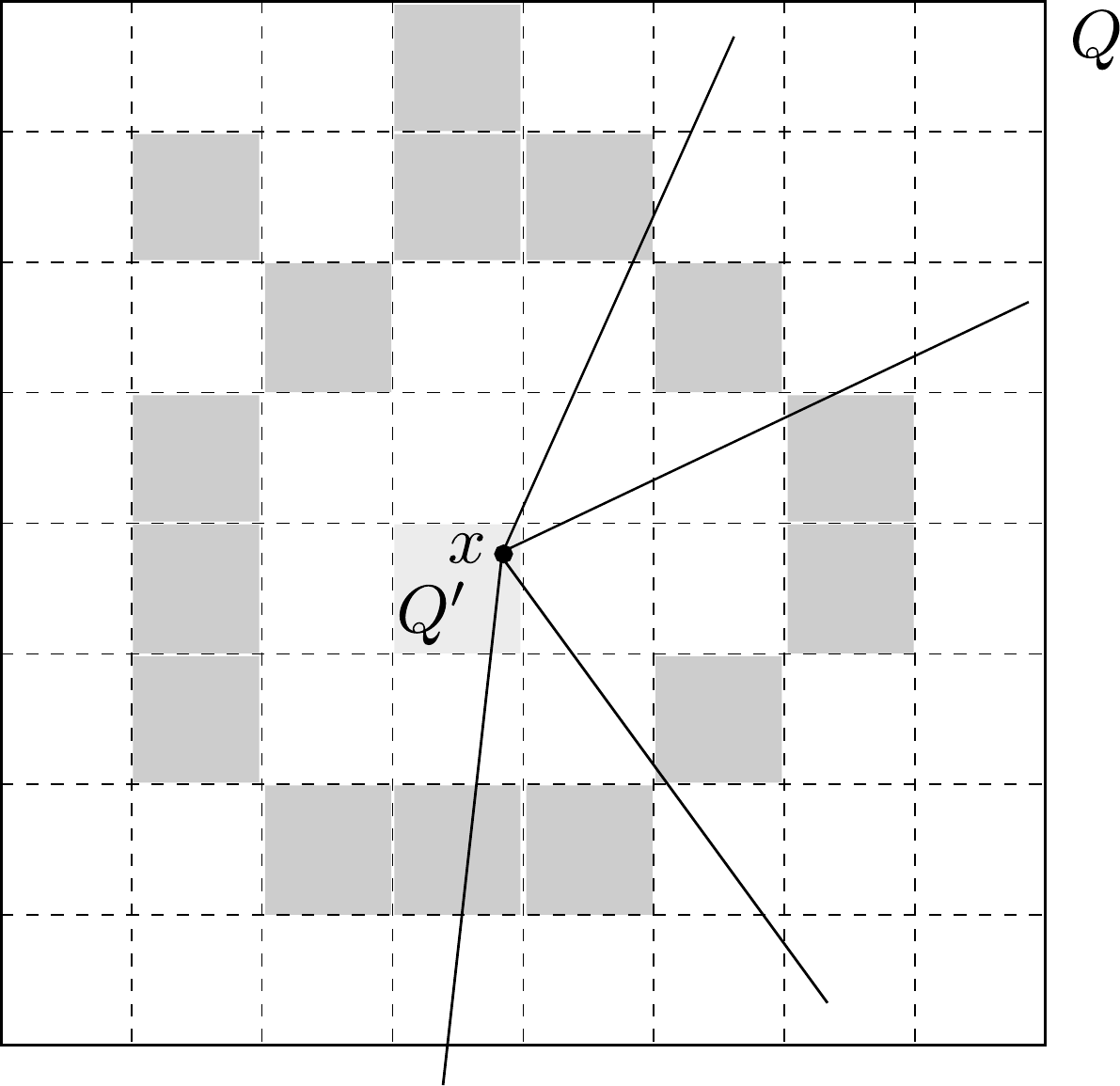}
\end{center}
\caption{A cube $Q' \prec_a Q$ becomes $\epsilon$-trapped if it is effectively surrounded by cubes $Q'' \prec_a Q$ (in the picture the darker gray cubes) each of which having at least $\epsilon$-portion of $\mu$ mass in $Q$.}
\label{dimentpicTrappedCubes}
\end{figure}

\begin{proof}[Proof of Theorem \ref{thm:meanconical}.] Let $h$ be the smallest integer such that $2^h \geq \sqrt{d}$, and let $K=K(d)$ be large enough so that $B(x,2^{-k}) \subset K Q^{k+h,x}$. For example, $K=2^{h+1}+1$ works.
Let $a=a(m,s,d,\alpha)$, $\epsilon = \epsilon(m,s,d,\alpha)$, and $p' = p'(m,s,d,\alpha)$ be the constants from Lemma \ref{lemma:conescales}. Define
$p = p'/2.$
and $q=1-p'/2$. As in the proof of Theorem \ref{thm:homogeneity}, we may assume that $\mu$ has been suitably localized and translated to ensure that, applying Lemma \ref{lemma:meandoubling} with the appropriate parameters, there is a constant $c_q>0$ such that
\begin{equation} \label{eq:modified-mean-doubling}
\liminf_{N\rightarrow\infty}\frac1N  \card\{ k \in [N] : \mu(Q^{k,x}) \geq c_q\mu(K Q^{k,x})\} > q\quad\text{for }\mu \text{ almost all } x.
\end{equation}
We also set
$$c = c(m,s,d,\alpha) = c_q\epsilon.$$

Consider $x \in \R^d$ and $k \in \N$ for which
\begin{align}
\label{trappeddoubling}
Q^{k+h+a,x} \text{ is } \epsilon\text{-trapped} \quad \text{and} \quad \mu(Q^{k+h,x}) \geq c_q\mu(K Q^{k+h}).
\end{align}
Let $V \in G(d,d-m)$ and $\theta \in \S^{d-1}$. Since $Q^{k+h+a,x}$ is $\epsilon$-trapped, we can choose $Q \prec_a Q^{k+h,x}$ with $\mu(Q) > \epsilon\mu(Q^{k+h,x})$ and
$$Q \subset X(x,V,\alpha) \setminus H(x,\theta,\alpha).$$
The choice of $h$ implies that $Q \subset Q^{k+h,x} \subset B(x,2^{-k})$, whence also
$$Q \subset X(x,2^{-k},V,\alpha) \setminus H(x,\theta,\alpha).$$
Moreover, using \eqref{trappeddoubling} and that $B(x,2^{-k}) \subset K Q^{k+h,x}$ (by the choice of $K$), we have
$$\mu(B(x,2^{-k})) \leq \mu(K Q^{k+h,x}) \leq c_q^{-1} \mu(Q^{k+h,x}).$$ Hence
\begin{align*}
\frac{\mu(X(x,2^{-k},V,\alpha) \setminus H(x,\theta,\alpha))}{\mu(B(x,2^{-k}))} \geq \frac{\mu(Q)}{c_q^{-1}\mu(Q^{k+h,x})} > c_q\epsilon = c.
\end{align*}
Lemma \ref{lemma:conescales} and \eqref{eq:modified-mean-doubling} now imply that, for $\mu$ almost every $x \in \Omega_s(\mu)$ and for all large enough $N \in \N$, the two properties in \eqref{trappeddoubling} are simultaneously satisfied for at least $(p'+q-1)N = pN$ values of $k \in [N]$. Hence the above argument implies the claim for $\mu$ almost every $x \in \Omega_s(\mu)$.

As for the points in $\Omega^s(\mu)$, the second part of Lemma \ref{lemma:conescales} implies the claim for infinitely many $N$ in a similar manner.
\end{proof}

\subsection{Proof of Theorem \ref{thm:meanporosity}}

Throughout this section integers $d\in\N$, $\ell\in[d]$, a number $0<\alpha<1/2$ and a measure $\mu$ on $[0,1)^d$
are fixed.

\begin{definition}[Porous cubes]
Choose the unique $a = a(\alpha)\in \N$ such that
\begin{align}
\label{choiceofa}
2^{-a} \leq 1-2\alpha < 2^{1-a}.
\end{align}
Given $\epsilon > 0$, a cube $Q'\in\mathcal{Q}_{k+a}$ is $(\alpha,\epsilon)$-\textit{porous}, if there exists $x\in Q'$ such that
$$\por_\ell(\mu,x,2^{-k},\varepsilon)>\alpha,$$
see Figure \ref{dimentpicPorousCubes}. We often suppress the notation $(\alpha,\epsilon)$ from the definition of porous cubes if they are clear from the context. The first such occasion is at hand: if $Q \in \cQ_k$ we write
$$Q_{\por} := \bigcup_{Q'\prec_a Q \text{ is porous}} Q'.$$
\end{definition}

Recall the definitions of $P_{\ell,\alpha,p}$ and $U_{\ell,\alpha,p}$ from Definition \ref{porodefi}.
\begin{lemma}
\label{porouslargenumbers}
Fix $\epsilon > 0$ and $0<p'< p < 1$. Then for any measure $\mu$ on $\R^d$ and for $\mu$ almost every $x\in P_{\ell,\alpha,p}$, we have
\begin{equation}\label{Pdef}
p_N(x) := \frac{1}{N}\sum_{k=1}^{N} \mu_{k,x}(Q^{k,x}_{\por}) \geq p'
\end{equation}
when $N \in \N$ is large. For $\mu$ almost all $x\in U_{\ell,\alpha,p}$ the estimate \eqref{Pdef} holds for infinitely many $N$.
\end{lemma}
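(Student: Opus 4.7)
The plan is to reduce the lemma to a direct application of Lemma \ref{lemma:blackandwhite}, with the labeling ``black'' = ``porous''. The key observation is that, by the definition of porous cubes together with the choice of $a$ in \eqref{choiceofa}, any point $x$ with $\por_\ell(\mu, x, 2^{-k}, \epsilon) > \alpha$ automatically witnesses the $(\alpha,\epsilon)$-porosity of its containing cube $Q^{k+a,x}\in\cQ_{k+a}$. Therefore, if $x\in P_{\ell,\alpha,p}$, then for all large $N$ there are at least $pN$ values of $k\in[N]$ for which $Q^{k+a,x}$ is porous, and a trivial re-indexing $j=k+a$ (which loses at most $a$ terms, an absorbable cost for any $p'<p$) gives
$$
\frac{1}{N}\sum_{j=1}^N \1_{Q^{j,x}\text{ is porous}} \;\geq\; \frac{pN-a}{N}
$$
for all large $N$.

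Next I would apply Lemma \ref{lemma:blackandwhite} to the deterministic labeling in which $Q'\in\cQ_j$ is declared ``porous'' (black) for $j\ge a$ iff it satisfies the definition with reference scale $2^{-(j-a)}$, and white otherwise. With this convention $Q^{k,x}_{\black}$ coincides with $Q^{k,x}_{\por}$ in the statement, and the lemma yields
$$
\frac{1}{N}\sum_{k=1}^N \Bigl(\mu_{k,x}(Q^{k,x}_{\por}) - \1_{Q^{k,x}\text{ is porous}}\Bigr) \longrightarrow 0
$$
for $\mu$-almost every $x\in\R^d$. Combining this with the previous inequality (after replacing $p'$ by an auxiliary $p''\in(p',p)$ to absorb both the loss of $a$ indices and the martingale error) gives $p_N(x)\ge p'$ for all large $N$ at $\mu$-a.e.\ $x\in P_{\ell,\alpha,p}$, which is the first assertion.

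The second assertion, about $U_{\ell,\alpha,p}$, is entirely analogous: the conclusion of Lemma \ref{lemma:blackandwhite} is an almost-sure limit along the full sequence $N\to\infty$, while the membership of $x$ in $U_{\ell,\alpha,p}$ supplies the lower bound on the indicator sum for infinitely many $N$. Passing to the common subsequence produces the claim. I do not foresee a substantial obstacle here; the only thing to watch carefully is the index shift by the fixed constant $a=a(\alpha)$, which is harmless, and the fact that the labeling does not depend on $x$, so that the $\mu$-null exceptional set from Lemma \ref{lemma:blackandwhite} is uniform.
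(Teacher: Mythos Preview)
Your proposal is correct and follows essentially the same argument as the paper: observe that $\por_\ell(\mu,x,2^{-k},\epsilon)>\alpha$ forces $Q^{k+a,x}$ to be porous, deduce the lower bound on the indicator average, and then apply Lemma~\ref{lemma:blackandwhite} with `black'~=~`porous'. The only cosmetic difference is that the paper absorbs the index shift by $a$ directly into the $\liminf$ (writing $\liminf_N \frac{1}{N}\sum_{k=1}^N \1_{Q^{k,x}\text{ is porous}}\ge p$), whereas you track it explicitly via $\frac{pN-a}{N}$ and an auxiliary $p''\in(p',p)$; both are equivalent and yield the claim.
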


\begin{proof}If $\por_\ell (\mu,x,2^{-(k-a)},\epsilon) > \alpha$, then $Q^{k,x}$ is porous. If $x \in P_{\ell,\alpha,p}$ (resp. $U_{\ell,\alpha,p}$) then, for large enough $N$ (infinitely many $N$), this happens for at least $pN$ of indices $k \in [N]$, i.e.
$$\liminf_{N \to \infty} \frac{1}{N} \sum_{k = 1}^N \1_{Q^{k,x} \text{ is porous}} \geq p \quad \text{for all } x \in P_{\ell,\alpha,p}$$
(resp. $\limsup_{N\to\infty}\cdots\geq p$ for all $x \in U_{\ell,\alpha,p}$).
Lemma \ref{lemma:blackandwhite} with 'black' = 'porous' yields the claim.
\end{proof}

\begin{figure}[h]
\begin{center}
\includegraphics[scale=0.3
]{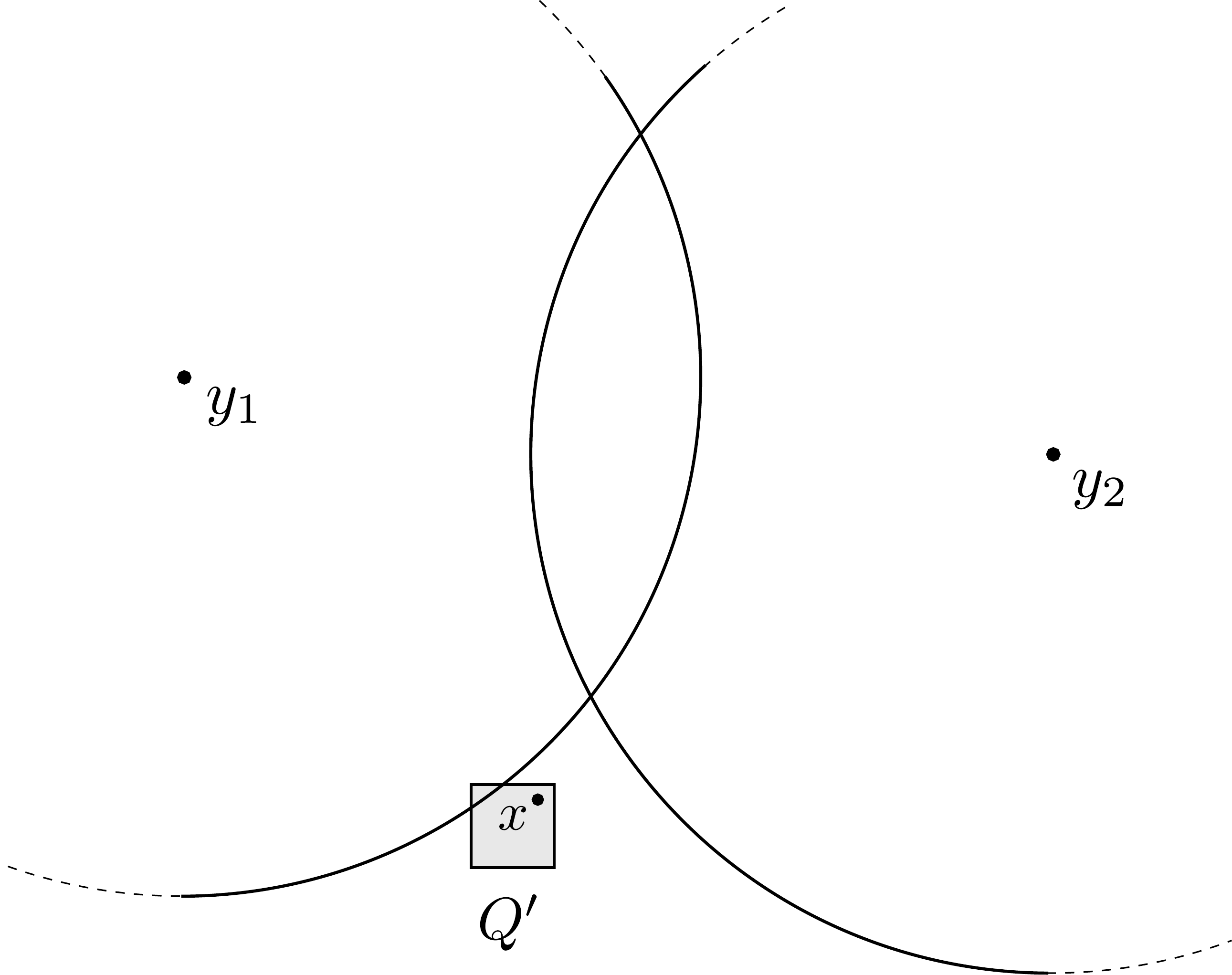}
\end{center}
\caption{In the picture $d = 2$, $\ell = 2$ and $\alpha$ is close to $1/2$. A cube $Q' \in \cQ_{k+a}$ becomes $(\alpha,\epsilon)$-porous if it contains a point $x$ such that the $\ell$-porosity at $x$ is at least $\alpha$. Since $\alpha$ is very close to $1/2$, the number $a = a(\alpha)$ is very large, so the size of the porous cube $Q'$ is very small compared to the size of the holes $B(y_1,\alpha 2^{-k})$ and $B(y_2,\alpha 2^{-k})$ obtained from the $\ell$-porosity.}
\label{dimentpicPorousCubes}
\end{figure}

Recall the following covering lemma \cite[Lemma 5.4]{KRS11}:
\begin{lemma}
\label{lemma:kporouscover}
Let $x \in \R^d$, $r > 0$ and $A \subset B(x,r)$. If $\por_\ell(A,z,r) \geq \alpha$ for every $z \in A$, then $A$ can be covered with $c(1 - 2\alpha)^{\ell-d}$ balls of radius $(1 - 2\alpha)r$, where $c = c(d) > 0$.
\end{lemma}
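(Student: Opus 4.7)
My proof plan is the following. After rescaling we may assume $r=1$ and set $\beta = 1 - 2\alpha$; it suffices to bound the cardinality $N$ of any maximal $\beta$-separated subset $\{z_1,\ldots,z_N\}\subset A$, since then the balls $B(z_i,\beta)$ cover $A$. At each $z_i$, the hypothesis furnishes $\ell$ orthogonal unit vectors $\theta_i^{(1)},\ldots,\theta_i^{(\ell)}$ and distances $d_i^{(j)}\in[\alpha,1-\alpha]$ such that $B(z_i + d_i^{(j)}\theta_i^{(j)},\alpha)$ is disjoint from $A$, which determines an $\ell$-plane $V_i = \mathrm{span}\{\theta_i^{(j)} : j\in[\ell]\}\in G(d,\ell)$. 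I would then use compactness of $G(d,\ell)$ to fix a finite $\eta$-net of $K=K(d)$ reference planes for a suitably small $\eta=\eta(d)>0$, and partition the $z_i$ by the closest reference plane; fixing one class with associated plane $V$, it remains to bound $N$ within the class.

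The crux is to show that the points of one class lie in a union of $O_d(1)$ affine slabs of the form $\{z : \dist(z,W)\leq C(d)\beta\}$, where $W$ is a $(d-\ell)$-dimensional affine subspace parallel to $V^\perp$. For two points $z,z'$ in the class, decompose $z'-z = v+u$ with $v\in V$ and $u\in V^\perp$; the requirement that $z'$ avoid each hole at $z$ gives $|v - d^{(j)}\theta^{(j)}|^2 + |u|^2 \geq \alpha^2$ for every $j\in[\ell]$, and the symmetric inequality holds from the viewpoint of $z'$. A case analysis in the $\ell$-dimensional space $V$ (using small $\eta$ so that the hole directions at $z$ and $z'$ are nearly aligned) then shows that $v$ is either of norm $O(\beta)$ or close to one of $O_d(1)$ specific vectors of the form $\sum_j \epsilon_j\, d^{(j)}\theta^{(j)}$ with $\epsilon_j\in\{0,\pm 2\}$; each such alternative contributes one slab.

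Once the slab decomposition is established, a $\beta$-separated subset of $\{z\in B(x,1) : \dist(z,W)\leq C(d)\beta\}$ has at most $c(d)\beta^{\ell-d}$ elements by a direct volume comparison after projection onto $W$. Summing over $O_d(1)$ slabs per class and $K$ classes gives $N\leq c(d)\beta^{\ell-d}$, completing the bound. The hard part is the case analysis establishing the slab structure; the orthogonality of the $\ell$ holes is used crucially here to produce the rigid ``corner'' pattern of forbidden regions which is incompatible with $A$ spreading in $\ell$-many extra directions simultaneously. A careful execution will require keeping track of the small errors coming from the angular tolerance $\eta$ (which affects the implicit constants but not the exponent $\ell-d$) and from the approximation $|u|\leq O(\beta)$ implicit in having reduced to a single slab.
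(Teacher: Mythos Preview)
The paper does not give its own proof of this lemma; it is simply quoted from \cite[Lemma 5.4]{KRS11} and used as a black box. So there is no in-paper argument to compare your proposal against, and you should consult that reference for the actual proof.

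That said, your proposed argument has a real gap at the ``case analysis'' step. The pairwise avoidance conditions $|v - d^{(j)}\theta^{(j)}|^2 + |u|^2 \ge \alpha^2$ (together with their symmetric counterparts from $z'$) do \emph{not} force $v=\proj_V(z'-z)$ to lie near one of finitely many vectors of the form $\sum_j \epsilon_j d^{(j)}\theta^{(j)}$. Already for $\ell=1$ with $u=0$ and perfectly aligned hole directions, the two constraints combine to give only
\[
v\in[-\beta,\beta]\ \cup\ [2\alpha,2]\ \cup\ [-2,-2\alpha]
\]
up to $O(\beta)$ errors; the outer pieces have length $1+\beta$, not $O(\beta)$. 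For $\ell\ge 2$ the region $\{v\in V:|v-d^{(j)}\theta^{(j)}|\ge\alpha\text{ for all }j\}$ is even worse---it is a connected set of full $\ell$-dimensional volume, not a finite union of $\beta$-neighbourhoods of points. Moreover, the parameters $d^{(j)}$ depend on the base point $z$, so even if each pair produced a finite list of candidate displacements, these lists would vary from point to point and would not assemble into a fixed family of $O_d(1)$ affine slabs parallel to $V^\perp$, which is what your final volume count requires.

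There is a second, related difficulty: when $|u|$ is comparable to $\alpha$ the hole constraint imposes essentially no restriction on $v$, so points that are far apart in $V^\perp$ can have arbitrary $V$-projections. Hence one cannot first reduce to a slab and then do a volume count in $V^\perp$; the two directions interact. The argument in the cited reference does not attempt a global slab decomposition of this kind.
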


With this we obtain the following lemma, which is analogous to \cite[Lemma 3.5]{BJJKRSS09}:
\begin{lemma}
\label{lemma:decompositionporous}
Let $k \in \N$, $\epsilon > 0$ and $Q \in \cQ_k$. Then $Q$ may be divided into three disjoint parts
$$Q=E\cup P\cup J,$$
where
\begin{enumerate}
\item $\mu(E)\leq c_0\varepsilon \mu(3Q)$,
\item $P$ can be covered by at most $c_1 2^{a(d-\ell)}$ cubes $Q'\prec_a Q$, and
\item   $J\cap Q_{\por}=\emptyset$.
\end{enumerate}
 Here $c_0=c_0(\ell,\alpha,d)$ and $c_1=c_1(d)$ are positive and finite constants.
\end{lemma}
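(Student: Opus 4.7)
The plan is to decompose $Q$ into a mass-light piece $E$, a geometrically sparse piece $P$ covered by few sub-cubes via Lemma~\ref{lemma:kporouscover}, and a remainder $J$ missing $Q_{\por}$. The strategy adapts the $\ell=1$ template of \cite[Lemma~3.5]{BJJKRSS09} by establishing set-porosity for a carefully chosen set of witnesses.

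First, for each porous $Q'\prec_a Q$, fix a witness point $x_{Q'}\in Q'$ provided by the definition of ``porous cube'', together with the $\ell$ mutually orthogonal hole balls $B_i^{Q'}=B(y_i^{Q'},\alpha 2^{-k})\subset B(x_{Q'},2^{-k})$ satisfying $\mu(B_i^{Q'})\leq\epsilon\mu(B(x_{Q'},2^{-k}))$. To define $E$, I will call a sub-cube $Q'\prec_a Q$ \emph{light} when $\mu(Q')\leq C\epsilon\mu(3Q)$, for a dimensional constant $C$ fixed in the next paragraph, and take $E$ to be the union of all light sub-cubes. Since $Q$ contains at most $2^{ad}$ cubes of generation $k+a$, a direct count gives $\mu(E)\leq C\,2^{ad}\,\epsilon\,\mu(3Q)$, so $(i)$ holds with $c_0=C\,2^{ad}$, a constant that only depends on $d,\ell,\alpha$ through \eqref{choiceofa}.

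The crux of the argument is to cover the remaining (``heavy'') porous cubes by few sub-cubes. Let $A=\{x_{Q'}:Q'\prec_a Q\text{ is porous with }\mu(Q')>C\epsilon\mu(3Q)\}$; the key claim is $\por_\ell(A,x,2^{-k})\geq\alpha$ for every $x\in A$. Indeed, if $x=x_{Q'}\in A$ and another heavy witness $x_{Q''}\in A$ lay in the hole $B_i^{Q'}$, then by \eqref{choiceofa} the side length $2^{-k-a}\leq(1-2\alpha)2^{-k}$ of $Q''$ is short compared to the hole radius $\alpha 2^{-k}$, so $Q''$ sits inside a slight enlargement of $B_i^{Q'}$ (which can be identified with $B_i^{Q'}$ itself by a negligible shrinkage of $\alpha$ absorbed in the constants). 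Hence $\mu(Q'')\leq\mu(B_i^{Q'})\leq\epsilon\mu(B(x_{Q'},2^{-k}))$, and a dimensional comparison between $\mu(B(x_{Q'},2^{-k}))$ and $\mu(3Q)$---carried out by choosing $C$ large enough that the inclusion $B(x_{Q'},2^{-k})\subset c(d)Q$ and the corresponding measure inflation are absorbed in the heavy/light threshold---contradicts the heaviness of $Q''$.

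Once $A$ is set-porous, Lemma~\ref{lemma:kporouscover} covers $A$ by at most $c(d)(1-2\alpha)^{\ell-d}\leq c(d)\,2^{a(d-\ell)}$ Euclidean balls of radius $(1-2\alpha)2^{-k}\leq 2\cdot 2^{-k-a}$; each such ball meets only a dimensional number of cubes $\prec_a Q$. Let $P$ be the union of all these cubes; this gives $(ii)$ with $c_1=c_1(d)$. Set $J=Q\setminus(E\cup P)$. Every porous $Q'\prec_a Q$ is then either light (so $Q'\subset E$) or heavy (so $x_{Q'}\in A$ lies in some covering ball and $Q'$, being one of the small sub-cubes through $x_{Q'}$, is already among the cubes composing $P$); in both cases $Q'\cap J=\varnothing$, giving $(iii)$. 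The most delicate point is the set-porosity verification for $A$: the measure-porosity of $\mu$ produces a bound relative to $\mu(B(x_{Q'},2^{-k}))$, while the heavy/light threshold is relative to $\mu(3Q)$, and in dimension $d\geq 2$ the ball $B(x_{Q'},2^{-k})$ can stick out of $3Q$. The comparison is therefore not pointwise and must be absorbed into $C=C(d,\alpha,\ell)$, which is precisely what forces $c_0$ to depend on $\alpha$.
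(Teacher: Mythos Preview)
Your approach is a legitimate alternative to the paper's, with a different choice of $E$. The paper simply takes $E := Q \cap \bigcup_{Q',j} B_{Q'}^j$, the union of the hole balls themselves; since $B(x_{Q'},2^{-k}) \subset 3Q$, each hole has $\mu(B_{Q'}^j) \leq \epsilon\,\mu(3Q)$, and there are at most $\ell\, 2^{ad}$ of them, giving (i) with $c_0 = \ell\, 2^{ad}$. Then $P := Q_{\por} \setminus E$ is \emph{automatically} set-$\ell$-porous at every point (the holes $B_{Q'}^j$ lie in $E$, hence miss $P$), and the paper applies Lemma~\ref{lemma:kporouscover} directly to $P$ after the same $\alpha \mapsto \alpha' = \alpha - \sqrt{d}\,2^{-a}$ adjustment you allude to. Your route---classifying sub-cubes as light/heavy and showing that the heavy witness points form a set-porous set $A$---works, but is a detour: it replaces a one-line geometric observation by a measure-threshold argument that then has to be converted back into geometry before Lemma~\ref{lemma:kporouscover} can be invoked. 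It buys nothing extra, and you must still take care that $E$, $P$, $J$ are made disjoint (e.g.\ by replacing $P$ with $P\setminus E$).

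One point in your write-up is actually wrong and should be deleted. The inclusion $B(x_{Q'},2^{-k}) \subset 3Q$ holds in every dimension: since $x_{Q'} \in Q$ and $3Q$ extends $Q$ by a full side-length $2^{-k}$ in each coordinate direction, any point within distance $2^{-k}$ of $x_{Q'}$ lies in $3Q$. Hence $\mu(B(x_{Q'},2^{-k})) \leq \mu(3Q)$ immediately, there is no ``measure inflation'' to absorb, and $C=1$ already suffices for your light/heavy threshold. Your ``most delicate point'' is therefore a non-issue; the dependence of $c_0$ on $\alpha$ comes solely from the factor $2^{ad}$ via \eqref{choiceofa}, exactly as in the paper.
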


\begin{proof} For any porous $Q' \prec_a Q$, choose $x = x_{Q'} \in Q'$ such that
$\por_\ell(\mu,x,2^{-k},\epsilon) > \alpha$.
By the definition of $\ell$-porosity, this implies that we may choose points $y_1,\dots,y_\ell \in \R^d$ with $(y_i-x) \cdot (y_j-x) = 0$ for $i \neq j$ such that the balls
$B_{Q'}^j := B(y_j,\alpha 2^{-k})$ 
satisfy
$B_{Q'}^j \subset B(x,2^{-k})$ and $\mu(B_{Q'}^j) \leq \epsilon\mu(B(x,2^{-k}))$ for $j\in[\ell]$.
Since $B(x,2^{-k}) \subset 3Q$ (because $x \in Q' \subset Q$), this gives
\begin{align}
\label{porousball}
B_{Q'}^j \subset 3Q \quad \text{and} \quad \mu(B_{Q'}^j) \leq \epsilon\mu(3 Q), \quad j\in[\ell].
\end{align}
Denote
$$E := Q \cap \bigcup_{Q' \prec_a Q \text{ is porous}\atop j\in[\ell]} B_{Q'}^j.$$
By \eqref{porousball} we have
$$\mu(E) \leq c_0 \epsilon \mu(3Q),$$
where $c_0 = c_0(\ell,\alpha,d) := \ell 2^{ad}$. Now we can define
$P := Q_{\por} \setminus E \quad \text{ and } \quad J := Q \setminus (E \cup P)$.
If $z\in P$, there is $x\in P$ with $|x-z|<\sqrt{d}2^{-a-k}$ such that $\por_\ell(P,x,2^{-k})>\alpha$. An easy calculation then implies that $\por_\ell(P,z,2^{-k})\ge\alpha'$ for $\alpha'=\alpha-\sqrt{d}2^{-a}$. Since $Q$ can be covered by $C(d)$ balls of radius $2^{-k}$, the Lemma \ref{lemma:kporouscover} yields $c=c(d)<\infty$ and a collection $\cB$ of at most $c(1-2\alpha')^{\ell-d}$ balls of radius $(1-2\alpha')2^{-k}$ whose union cover $P$. Since
$$1-2\alpha'=1-2\alpha+\sqrt{d}2^{1-a}\le (1+\sqrt{d})2^{1-a}$$
by \eqref{choiceofa}, it follows that $P$ may be covered by $c_1 2^{a(d-\ell)}$ cubes $Q'\prec_a Q$ for some $c_1=c_1(d)<\infty$.

Finally, the claim for $J$ is immediate from its definition.
\end{proof}

\begin{proof}[Proof of Theorem \ref{thm:meanporosity}]  Let $0<q<1$. By localizing and normalizing, we can assume that $\mu$ is a probability measure supported on $[0,1)^d$ and, thanks to Lemma \ref{lemma:meandoubling} and a random translation, also that
\begin{equation} \label{eq:doubling-scales}
\liminf_{N\rightarrow\infty}\frac1N \card\{ k \in [N] : \mu(Q^{k,x}) \geq \beta(q)\mu(3Q^{k,x})\} > q\quad\text{ for }\mu\text{ almost all } x,
\end{equation}
where $0<\beta(q)<1/(2 c_0)$ is a function of $q$ (which also depends on the dimension $d$) such that
\begin{equation*}
\lim_{g\uparrow 1}\beta(q)=0.
\end{equation*}

Let $\epsilon=\epsilon(q) := \beta(q)^2$. When $x \in [0,1)^d$ and $k \in \N$, let $Q^{k,x} = E^{k,x} \cup P^{k,x} \cup J^{k,x}$ be the decomposition given by Lemma \ref{lemma:decompositionporous} with this $\epsilon$. Write $K = c_1 2^{a(d-\ell)}$ and let $\widetilde{P}^{k,x}$ be the union of the at most $K$ cubes that cover $P^{k,x}$ from Lemma \ref{lemma:decompositionporous}. Now
$$Q_{\por}^{k,x} \setminus \widetilde{P}^{k,x} \subset E^{k,x}, \quad P^{k,x} \subset \widetilde{P}^{k,x}, \quad \text{and} \quad J^{k,x} \subset Q^{k,x} \setminus Q^{k,x}_{\por}.$$
Recall that $\varphi(t)=t\log(1/t)$ is the entropy function. Given $k\in\N$, we can estimate
$$H^a(\mu,Q^{k,x}) \leq \sum_{Q \prec_a Q^{k,x}\atop Q \subset Q_{\por}^{k,x} \setminus \widetilde{P}^{k,x}} \varphi(\mu_{k,x}(Q)) + \sum_{Q \prec_a Q^{k,x}\atop Q \subset Q_{\por}^{k,x} \cap \widetilde{P}^{k,x}} \varphi(\mu_{k,x}(Q)) + \sum_{Q \prec_a Q^{k,x}\atop Q \cap Q_{\por}^{k,x} = \emptyset} \varphi(\mu_{k,x}(Q)).$$
Let $S^{k,x}_1$, $S^{k,x}_2$ and $S^{k,x}_3$ be the three sums in the right-hand side above. Using the fact that $Q^{k,x}_{\por} \setminus \widetilde{P}^{k,x} \subset E^{k,x}$ and the definition of $\widetilde{P}^{k,x}$, Lemma \ref{trivialentropy} yields
\begin{align*}
S^{k,x}_1 &\leq \mu_{k,x}(E^{k,x}) \log \frac{2^{ad}}{\mu_{k,x}(E^{k,x})}, \\
S^{k,x}_2 &\leq \mu_{k,x}(Q^{k,x}_{\por}) \log \frac{K}{\mu_{k,x}(Q^{k,x}_{\por})},\\
S^{k,x}_3 &\leq (1-\mu_{k,x}(Q^{k,x}_{\por})) \log \frac{2^{ad}}{1-\mu_{k,x}(Q^{k,x}_{\por})}.
\end{align*}
For $N \in \N$, denote
$D_N(x) = \{k \in [N] : \mu(Q^{k,x}) \geq \beta(q) \mu(3Q^{k,x})\},$
and let $q_N(x) = \card D_N(x) / N$.

Since
$$\mu(E^{k,x}) \leq c_0 \epsilon\mu(3Q^{k,x}) \leq c_0 \beta(q) \mu(Q^{k,x})$$
for all $k \in D_N(x)$, we have
\begin{equation}\label{aa}
\sum_{k \in D_N(x)} S^{k,x}_1 \leq c_0 \beta(q) N \log \frac{2^{ad}}{c_0 \beta(q)}.
\end{equation}
Moreover, the log sum inequality (Lemma \ref{lemma:logsum}) implies
\begin{equation}\label{bee}
\sum_{k = 1}^N S^{k,x}_2 + \sum_{k = 1}^N S^{k,x}_3 \leq p_N(x) N\log \frac{K}{p_N(x)} + (1-p_N(x))N\log \frac{2^{ad}}{1-p_N(x)},
\end{equation}
where
$$p_N(x) = \frac{1}{N}\sum_{k = 1}^N \mu_{k,x}(Q^{k,x}_{\por}).$$

We are going to use the above estimates on $S^{k,x}_1$ for the ``doubling'' scales $k\in D_N(x)$. For other values of $k$, we do not have any control over $\mu_{k,x}(E^{k,x})$, so we use the trivial bound (recall Lemma \ref{trivialentropy})
\begin{equation}\label{cee}
H^a(\mu,Q^{k,x})\le ad.
\end{equation}
As the amount of non-doubling scales can be made arbitrarily small by letting $q\to 1$, this will cause no harm in the end.

Combining \eqref{aa}, \eqref{bee}, \eqref{cee}, and plugging in $K = c_1 2^{a(d-\ell)}$, we have proven that for each $x \in \R^d$ and $N \in \N$,
\begin{align*}
\frac{1}{Na}\sum_{k = 1}^N H^a(\mu,Q^{k,x}) \leq d-p_N(x)\ell+\frac{c_2(q,x,N)}{a}+(1-q_N(x))d+c_0 d \beta(q),
\end{align*}
where
\begin{align*}
c_2(q,x,N) = p_N(x)\log c_1+p_N(x)\log\tfrac{1}{p_N(x)}+(1-p_N(x))\log\tfrac{1}{1-p_N(x)} +  c_0 \beta(q)\log\tfrac{1}{c_0 \beta(q)}
\end{align*}
satisfies $c_2(q,x,N)\le c_3<\infty$ for some uniform constant $c_3$.
If we now combine Lemma \ref{porouslargenumbers} and \eqref{eq:doubling-scales}, we get that for all $0<p'<p$, for $\mu$ almost every $x \in P_{\ell,\alpha,p}$, and for all large enough $N \in \N$,
$$p_N(x) \geq p' \quad \text{and} \quad q_N(x) \geq q.$$
Hence local entropy averages implies (letting $p'\rightarrow p$)
\begin{align*}
\ulocd(\mu,x) &= \limsup_{N \to \infty}\frac{1}{Na}\sum_{k = 1}^N H^a(\mu,Q^{k,x})\\
 &\leq d-p\ell+\frac{c_3}{a}+(1-q+c_0\beta(q))d
 \end{align*}
for $\mu$ almost every $x \in P_{\ell,\alpha,p}$. For $\mu$ almost all $x\in U_{\ell,\alpha,p}$ we have $p_N(x)\ge p$ for infinitely many $N\in\N$ and this leads to
\begin{align*}
\llocd(\mu,x) \leq d-p\ell+\frac{c_3}{a}+(1-q+c_0\beta(q))d.
 \end{align*}
for almost all $x\in U_{\ell,\alpha,p}$.
Since $2^a \geq \tfrac{1}{1-2\alpha}$ by \eqref{choiceofa}, and $1-q+c_0\beta(q)\rightarrow 0$ as $q \to 1$, the required estimates follow.
\end{proof}

\section{Further remarks}
\label{sec:remarksandfurtheresults}

We discuss here some of the questions raised by our results. It seems likely that at least some of them could be answered by further developing the technique of local entropy averages. On the other hand, some of them may turn out to be harder and require deeper new ideas.

\emph{1)} Although we do not make them explicit, all the constants appearing in Theorems \ref{thm:homogeneity},\ref{thm:meanconical} and \ref{thm:meanporosity} are effective. However, in all cases they are very far from optimal. In particular, they worsen very fast with the ambient dimension $d$; it would be interesting to know if this a genuine phenomenon or an artifact from the method (in particular, the switching between balls and cubes).

\emph{2)} As mentioned above, the estimate of Theorem \ref{thm:meanporosity} is asymptotically sharp as $\alpha\rightarrow\tfrac12$. See e.g. \cite[Example 3.9]{BJJKRSS09}. When $\alpha$ is fixed and $p\rightarrow 0$, the proof of Theorem \ref{thm:meanporosity} gives the following sharper estimate
\begin{align*}
\udimloc(\mu,x)\leq d-p\ell+\frac{c(d)p\log{\frac1p}}{\log\frac{1}{1-2\alpha}}
\end{align*}
for $\mu$ almost all $x\in P_{\ell,\alpha,p}(\mu)$ (and similarly with $\ldimloc$ and $U_{\ell,\alpha,p}$). We believe this estimate should hold without the $\log\tfrac1p$ term, i.e. that
\begin{align*}
\udimloc(\mu,x)\leq d-p\ell+\frac{c(d)p}{\log\frac{1}{1-2\alpha}}\,.
\end{align*}

\emph{3)} We believe it should be possible to choose $p$ independent of $\alpha$ in Theorem \ref{thm:meanconical}, even tough our proof does not imply this. In the homogenity estimate (Theorem \ref{thm:homogeneity}) and its dyadic counterpart (Proposition \ref{prop:dyadichomo}), $p$ is independent of $\delta$ (resp. $a$), but it is easy to see that the statement fails as $p\rightarrow 1$. Also, it is not known what are the correct asymptotics for $c$ in Theorem \ref{thm:meanconical} as $\alpha\rightarrow 0$ and/or $s\rightarrow m$.

\emph{4)} Koskela and Rohde \cite{KoskelaRohde97} consider a version of mean porosity for sets that contain holes for a fixed proportion of $n\in\N$ in the annuli $A_n=B(x,\lambda^{n})\setminus B(x,\lambda^{n+1})$. They obtain the sharp bound for the packing dimension of such mean porous sets as the parameter $\lambda\rightarrow 1$. In \cite{Nieminen06} Nieminen is interested in a weak form of porosity where the relative size of the ``pores'' is allowed to go to zero when $r\to 0$. It seems possible that the method of local entropy averages can be used to provide measure versions of their results. Concerning the porosity condition in \cite{Nieminen06}, one should obtain a gauge function $h$ depending on the porosity data such that a porous measure $\mu$ is absolutely continuous with respect to the Hausdorff or packing measure in this gauge. Our method can certainly be used to obtain dimension estimates for spherically porous sets and measures, see e.g. \cite{KST00}.

\emph{5)} There are very few results on the dimension of porous sets or measures in metric spaces (see \cite{JJKRRS10,KRS11}). It would be very interesting to see, if one could apply a version of the local entropy averages to obtain new dimension bounds in this direction. If one applies the local entropy averages formula directly, there is usually an extra error term arising from the geometry of the metric  dyadic grid (the ``cubes'' are no longer isometric). For this reason, the straightforward generalisation of the method gives useful information only if the effect on entropy averages is larger than the error caused by this irregularity.

\section{Acknowledgements}

The authors are grateful to Antti K\"aenm\"aki for inspiring discussions related to this project.

\bibliographystyle{plain}
\bibliography{geoentFINAL}

\end{document}